\newtheorem{thm}{Theorem}[section]
\newtheorem{dfn}[thm]{Definition}
\newtheorem{lem}[thm]{Lemma}
\newtheorem{prop}[thm]{Proposition}
\newtheorem{remarks}[thm]{Remark}
\newtheorem{cor}[thm]{Corollary}
\newtheorem{ex}[thm]{Example}
\def\bt{\begin{thm}}
\def\bp{\begin{prop}}
\def\blem{\begin{lem}}
\def\bd{\begin{dfn}}
\def\br{\begin{remarks}}
\def\bc{\begin{cor}}
\def\bex{\begin{ex}}
\def\beqs{\begin{eqnarray*}}
\def\beq{\begin{eqnarray}}
\def\bi{\begin{itemize}}
\def\et{\end{thm}}
\def\ep{\end{prop}}
\def\elem{\end{lem}}
\def\ed{\end{dfn}}
\def\er{\end{remarks}}
\def\ec{\end{cor}}
\def\eex{\end{ex}}
\def\eeqs{\end{eqnarray*}}
\def\eeq{\end{eqnarray}}
\def\ei{\end{itemize}}
\def\endpf{{\hfill$\square$\medskip}}
\def \ao{A_{\gamma_0}}
\def\wot{\widehat{\otimes}}
\def\D{\tilde{D}}
\def\td{\theta_{\delta}}
\def\ad{{\rm ad}}
\def\id{{\rm id}}
\def\c{\cdot}
\def\lo{L^1(G)}
\def\li{L^{\infty}(G)}
\def\r{\rangle}
\def\l{\langle}
\def\sf{S_0(G)}
\def\N{{\mathbb N}}
\def\C{{\mathbb C}}
\def\R{{\mathbb R}}
\def\sa{S^1\!A(G)}
\def\sah{S^1\!A(H)}
\def\NA{\|_{A(G)}}
\def\NSA{\|_{S^1\!A}}
\def\NL{\|_{L^1}}
\def\so{S^1(G)}
\begin{document}
\title[Biflatness and Pseudo-Amenability of  Segal algebras]
{Biflatness and Pseudo-Amenability of  Segal algebras}
\author[Ebrahim Samei,  Nico Spronk and
Ross Stokke]{Ebrahim Samei*,  Nico Spronk$^{**}$  and
Ross Stokke$^{***}$}

\maketitle

\begin{abstract}
We investigate generalized amenability and biflatness properties  of
various (operator) Segal algebras in both the group algebra, $\lo$,
and the Fourier algebra, $A(G)$, of a locally compact group, $G$.
\end{abstract}

\footnote{{\it Date}: \today.
2000 {\it Mathematics Subject Classification.} Primary 43A20, 43A30;
Secondary 46H25, 46H10, 46H20, 46L07.
{\it Key words and phrases.} Segal algebra, pseudo-amenable Banach
algebra, biflat Banach algebra.

* Research supported by an NSERC Post Doctoral Fellowship.
** Research supported by NSERC under grant no.\ 312515-05.
*** Research supported by NSERC under grant no.\ 298444-04.
}

Barry Johnson introduced the important concept of amenability for
Banach algebras in \cite{Joh1}, where he proved, among many other
things, that a group algebra $\lo$ is  amenable  precisely when the
locally compact group, $G$,  is amenable. For other Banach algebras,
it is often useful to relax some of the conditions in the original
definition of amenability and a popular theme in abstract harmonic
analysis has been to find, for various classes of Banach algebras
associated to  locally compact groups,  a ``correct
 notion'' of amenability in the sense that it singles out the
amenable groups.  For example, the measure algebra $M(G)$ is
amenable if and only if $G$ is both amenable and discrete
\cite{DGH}, however $M(G)$ is Connes-amenable (a definition of
amenability for dual Banach algebras) exactly when $G$ is amenable
\cite{Run2}.  As another example, the Fourier algebra, $A(G)$, can
fail to be amenable even for compact groups \cite{Joh2}, but is
operator amenable (a version of amenability that makes sense for
Banach algebras with an operator space structure) if and only if $G$
is amenable \cite{Rua}.

The purpose of this paper is to examine the amenability properties
of Segal algebras,  in both $\lo$ and $A(G)$. All of the
aforementioned versions of amenability imply the existence of a
bounded approximate identity (or identity in the case of
Connes-amenability), however, a proper Segal algebra never has a
bounded approximate identity \cite{Bur}.  Ghahramani, Loy and Zhang
have introduced several notions of ``amenablility without
boundedness", including approximate and pseudo-amenability, which
do not a priori imply the existence of bounded approximate identities
\cite{GL}, \cite{GZ}.   It is thus  natural to try to determine when
a Segal algebra is  approximately/pseudo-amenable. Indeed, this has
already been considered  in \cite{GL} and \cite{GZ}. In particular,
Ghahramani and Zhang showed that  if $S^1(G)$ is a Segal algebra in
$\lo$ with an approximate identity which ``approximately commutes
with orbits'' (this includes all [SIN]-groups) and $G$ is amenable,
then $\so$ is pseudo-amenable and that when $G$ is compact, $\so$ is
pseudo-contractible \cite[Propostion 4.4 and Theorem 4.5]{GZ} (also
see \cite[Corollary 7.1]{GL}). At present, there is  no known
example of an approximately amenable Banach algebra without a
bounded approximate identity, so in our study of Segal algebras we
will only consider pseudo-amenability and pseudo-contractibility. We
note that the approximate and pseudo-amenability of $\lo$, $M(G)$,
and $A(G)$ are studied in \cite{GL}, \cite{GZ} and \cite{GS}.

An important property, which is  related to amenability, is  the
homological notion of  biflatness (see, for example, \cite[Theorem
2.9.65]{Dal}).
In Section 2 we provide a natural generalization of biflatness, in
the spirit of the definitions of approximate and pseudo-amenability:
approximate biflatness. Our definition is inspired by a recent
characterization of biflatness of A.Yu.\ Pirkovskii \cite{Pir}.  We
prove  that a sufficient condition for $A$ to be pseudo-amenable is
that it is approximately biflat and has an approximate identity
(Theorem \ref{ApproxBiflat-PseudoProp}). The section concludes with
an examination of some hereditary properties of (approximately)
biflat Banach algebras that are needed in our study of the
approximate cohomology of Segal algebras.

In Section 3, we study Segal algebras, $\so$, in $\lo$. We prove
that $G$ is amenable when  $\so$ is pseudo-amenable (Theorem
\ref{SOPseudoAmThm}) and prove that for [SIN]-groups, $\so$ is
either pseudo-amenable or approximately biflat if and only if $G$ is
amenable.  For symmetric Segal algebras,  we show  that   $G$ is
amenable exactly when $\so$ is a flat $\lo$-bimodule; which happens
exactly when $\so$ has a type of approximate diagonal in $\lo
\widehat{\otimes} \so$ (Theorem \ref{T:pseudo-biflat-approx
diagonal}). This idea in then used in Theorem \ref{GhahLauThm} to
give an alternative approach to that of \cite{GLau} for describing
continuous derivations from $\so$ into $\lo$-modules when $G$ is
amenable. We show in Theorem \ref{CpctVsPseudoContrThm} that $\so$
is compact when $\so$ is pseudo-contractible (the converse to
\cite[Theorem 4.5]{GZ}).  Finally, in Theorem \ref{wstarapproxinner}
we prove, for any group $G$ and every continuous derivation $D:
S^1(G) \rightarrow S^1(G)^*$, that $\pi^* \circ D $ is
w*-approximately inner.

In Sections 4 and 5 we turn our attention to (operator) Segal
algebras in $A(G)$.  We first show in Theorem
\ref{SA(G)PseudoContractThm}  that an arbitrary Segal algebra
$SA(G)$ in $A(G)$ is pseudo-contractible if and only if $G$ is
discrete and $SA(G)$ has an approximate identity. We then focus on
the Lebesgue-Fourier algebra $S^1A(G) = A(G) \cap \lo$ which was
introduced by Ghahramani and Lau in  \cite{GL1}, and was recently
studied by Forrest, Wood, and the second-named author in \cite{FSW}.
As well, we will examine Feichtinger's Segal algebra $S_0(G)$ which
was shown by the second-named author to have many remarkable
properties \cite{Spr2}. We prove in Theorem \ref{S1APseudoThm} that
when $S^1A(G)$ has an approximate identity and $G$ contains an
abelian open subgroup, then $\sa$ is approximately biflat (and
therefore pseudo-amenable). Supposing that $G$ contains an open
subgroup, $H$, that is weakly amenable and such that $\Delta_H$, the
diagonal subgroup of $H \times H$,  has a bounded approximate
indicator (this is true for example whenever $G_e$, the connected
component of the identity,  is amenable), then $\sa$ is operator
approximately biflat (and therefore operator pseudo-amenable)
whenever it has an approximate identity (Theorem
\ref{S1AOpPseudoThm}). We conclude with Theorem \ref{S_0biflatCor}
which  shows that under these same hypotheses, the Feichtinger Segal
algebra, $S_0(G)$,  is actually operator biflat. This, in
particular, implies that it is operator pseudo-amenable.

\section{Preliminaries}

\subsection{Banach algebras of harmonc analysis}
Let $G$ be a locally compact group and let $M(G)$ be the Banach
space of complex-valued, regular Borel measures on $G$. The space
$M(G)$ is a unital Banach algebra with the convolution
multiplication and $L^1(G)$, the group algebra on $G$, is a closed
ideal in $M(G)$. We write $\delta_s$ for the point mass at $s\in G$;
the element $\delta_e$ is the identity of $M(G)$, and $l^1(G)$ is
the closed subalgebra of $M(G)$ generated by the point masses.

Let $G$ be a locally compact group,  let $P(G)$ be the set of all
continuous positive definite functions on $G$, and let $B(G)$ be its
linear span. The space $B(G)$ can be identified with the dual of the
group $C^*$-algebra $C^*(G)$, this latter being the completion of
$L^1(G)$ under its largest $C^*$-norm. With pointwise multiplication
and the dual norm, $B(G)$ is a commutative regular semisimple Banach
algebra. The Fourier algebra $A(G)$ is the closure of $B(G)\cap
C_c(G)$ in $B(G)$. It is shown in \cite{Eym} that $A(G)$ is a
commutative regular semisimple Banach algebra whose carrier space is
$G$. Also, up to isomorphism, $A(G)$ is the unique predual of
$VN(G)$, the von Neumann algebra generated by the left regular
representation of $G$ on $L^2(G)$.

Let $H$ be a closed subgroup of  $G$, and let $I(H) = \{ v \in A(G):
v \big{|}_H = 0 \}.$   A net $(u_\gamma)$ in $B(G)$ is called an \it
approximate indicator \rm for $H$ if
\begin{itemize}
\item[(i)] $\lim v (u_\gamma \big{|}_H) = v$ for all $v \in A(H)$;
and  \item[(ii)] $ \lim w u_\gamma  = 0 $ for all $w \in I(H)$.
\end{itemize}
Approximate indicators were introduced in \cite{ARS}.

\subsection{Operator spaces}  Our standard reference for operator spaces
is \cite{ER}.  We summarize some basic definitions, below.

Let $V$ be a Banach space.  An {\it operator space structure} on $V$
is a family of norms $\{\|\cdot\|_n:M_n(V)\to\R^{\geq
0}\}_{n\in\N}$ -- where each $M_n(V)$ is the space of $n\times n$
matrices with entries in $V$ -- which satisfy Ruan's axioms.  The
natural morphisms between operator spaces are the {\it completely
bounded maps}, i.e.\ those linear maps $T:V\to W$ which satisfy
$\|T\|_{cb}=\sup_{n\in\N}\|T_n\| < \infty$ where $T_n:M_n(V)\to
M_n(W)$ is given by $T_n[v_{ij}]=[Tv_{ij}]$.  We say that $T$ is
{\it completely contractive} if $\|T\|_{cb}\leq 1$.  Operator spaces
admit an analogue of the projective tensor product $\wot$, which we
call the operator projective tensor product $\wot_{op}$.

If $A$ is a Banach algebra which is also an operator space, and $V$
is a left $A$-module and an operator space, we say that $V$ is a
{\it completely contractive left $A$-module} if the product map
$\pi_0:A\otimes V\to V$ extends to a complete contraction
$\pi:A\wot_{op}V\to V$.  Completely contractive right and bi-modules
are defined similarly.  We say that $A$ is a completely contractive
Banach algebra if it is a completely contractive bimodule over
itself.  Natural examples include $\lo$, which inherits the maximal
operator space structure as the predual of a commutative von Neumann
algebra; and $A(G)$, which inherits its operator space structure as
the predual of $VN(G)$.

\subsection{Amenability properties}
Let $A$ be a (completely contractive) Banach algebra.

Following Johnson \cite{Joh} we say that $A$ is {\it (operator)
amenable} if $A$ admits is a {\it bounded approximate diagonal},
i.e.\ a bounded net $(m_\alpha)$ in $A\wot A$ (resp.\ in
$A\wot_{op}A$) such that
\begin{equation}\label{diagconds}
a\cdot m_\alpha-m_\alpha\cdot a\to0,\qquad \pi(m_\alpha)a \to a
\end{equation}
for all $a\in A$, where $a\cdot (b\otimes c)=(ab)\otimes c,\;
(b\otimes c)\cdot a=b\otimes(ca)$, and $\pi$ is the product map.
(Operator) amenability of $A$ is equivalent to having every
(completely) bounded derivation from $A$ into a(n operator) dual
bimodule be inner; see \cite{Joh}. A natural relaxation of
amenability is to allow $A$ to admit a diagonal net, as in
(\ref{diagconds}) above, but not insist that it is bounded.  In
doing so we obtain {\it (operator) pseudo-amenability}, as defined
in \cite{GL}. If $A$ admits a net in $A\wot A$ (resp.\ in
$A\wot_{op}A$) which satisfies (\ref{diagconds}), and the additional
property that $a\cdot m_\alpha=m_\alpha\cdot a$, then $A$ is said to
be {\it (operator) pseudo-contractible}, as defined in \cite{GZ}.

We say that $A$ is \it (operator) biflat \rm if there is a (completely bounded) 
bounded $A$-bimodule map $\theta:(A\wot A)^*\to A^*$ (resp.\ $(A\wot_{op}
A)^*\to A^*$) such that $\theta\circ\pi^*=\id_{A^*}$.  A.Ya.\
Helemskii proved that $A$ is amenable if and only if $A$ is biflat
and admits a bounded approximate identity; see \cite{Hel} or
\cite{CL}. The analogous characterization of operator amenability
follows similarly.  A (completely contractive) left $A$-module is
said to be {\it (operator) projective} if there is a (completely bounded) 
bounded left $A$-module
map $\xi:V\to A\wot V$ (resp.\ $V\to A\wot_{op} V$) such that
$\pi\circ\xi=\id_{V}$.  As similar definition holds for right
modules. $A$ is {\it (operator) biprojective} if there is a (completely bounded) 
bounded $A$-bimodule map $\xi:A\to A\wot A$ (resp.\ $A\to A\wot_{op} A$)
such that $\pi\circ\xi=\id_{A}$.

\subsection{Segal algebras}
Segal algebras were first defined by H.\ Reiter for group algebras;
see \cite{RS}, for example. The definition of operator Segal
algebras appeared in \cite{FSW}. However, our abstract definition
deviates from the one given in \cite{FSW} in the sense that we
demand that Segal algebras be essential modules.

Let $A$ be a (completely contractive) Banach algebra.  An {\it (operator)
Segal algebra} is a subspace $B$ of $A$ such that

(i) $B$ is dense in $A$,

(ii) $B$ is a left ideal in $A$, and

(iii) \parbox[t]{4.4in}{$B$ admits a norm (operator space structure)
$\|\cdot\|_B$ under which it is complete and a (completely)
contractive $A$-module.}

(iv) $B$ is an essential $A$-module:  $A\cdot B$ is $\|\cdot\|_B$-dense
in $B$.

\noindent We further say that $B$ is {\it symmetric}
if it is also a (completely) contractive essential right $A$-module.

In the case that $A=\lo$ we will write $\so$ instead of $B$ and further
insist that

(iv) \parbox[t]{4.4in}{$\so$ is closed under left translations:  $L_xf\in\so$ for
all $x$ in $G$ and $f$ in $\so$}

\noindent where $L_xf(y)=f(x^{-1}y)$ for $y$ in $G$.
By well-known techniques, condition (iii) on $B=\so$ is equivalent to

(iii') \parbox[t]{4.4in}{the map $(x,f)\mapsto L_xf:G\times\so\to\so$ is continuous
with$\|L_xf\|_{S^1}=\|f\|_{S^1}$ for all $x$ in $G$ and $f$ in $\so$.}

\noindent Moreover, symmetry for $\so$ is equivalent to having that
$\so$ is closed under right actions -- $R_xf\in\so$ for $x$ in $G$
and $f$ in $\so$, where $R_xf(y)=f(yx^{-1})\Delta(x^{-1})$ --  with
the actions being continuous and isometric.

We will discuss two specific types of operator Segal algebras in the
Fourier algebra $A(G)$.  One is the Lebesgue-Fourier algebra, $\sa$,
whose study was initiated in \cite{GL1}, and which was shown to be
an operator Segal algebra in \cite{FSW}.  The second is
Feichtinger's algebra $S_0(G)$, whose study in the non-commutaive
case was taken up in \cite{Spr2}; this study included an exposition
of the operator space structure.  Though slightly different
terminology was used in that article, it was proved there that
$S_0(G)$ is an operator Segal algebra in $A(G)$, in the sense
defined above.

\section{Approximate biflatness and pseudo-amenability}

Throughout this section, $A$ is a Banach algebra. Recall that if
$E$, $F$ are Banach spaces, then the weak$^*$ operator topology
(W*OT) on ${\mathcal B}(E, F^*)$ is the locally convex topology
determined by the seminorms $\{p_{e,f} : e \in E, \ f\in F\}$ where
$p_{e, f}(T) = | \l f, Te \r |$. On bounded sets, the W*OT is
exactly the $w^*$-topology of ${\mathcal B}(E, F^*)$ when identified
with $(E \wot F)^*$, so closed balls of ${\mathcal B}(E, F^*)$ are
W$^*$OT compact. When $E$ and $F$ are operator spaces,  ${\mathcal C
B}(E, F^*)$ is identified with $(E \wot_{op} F)^*$ \cite[Corollary
7.1.5]{ER}. On $\|\cdot\|_{cb}$-bounded subsets of ${\mathcal C
B}(E, F^*)$, the W*OT agrees with the weak* topology.

Suppose  that $X$ and $Y$ are Banach $A$-bimodules. Following A.Yu.
Pirkovskii \cite{Pir}, a net $(\td)_\delta$ of bounded linear maps
from $X$ into $Y$, satisfying
\beq \label{ApproxMorphismEq} \|\td(a
\c x ) - a \c \td(x) \| \rightarrow 0  \ \ {\rm and } \ \ \| \td(
x\c a ) - \td (x) \c a \| \rightarrow 0
\eeq for all $a$ in $A$, will be
called an \it approximate $A$-bimodule morphism \rm from $X$ to $Y$.
If  $Y$ is a  dual Banach space, and instead of norm convergence we
have $w^*$-convergence in (\ref{ApproxMorphismEq}), we call
$(\td)_\delta$ a \it $w^*$-approximate $A$-bimodule morphism. \rm

The following proposition may be compared with \cite[Corollary
3.2]{Pir}.

\bp \label{BiflatnessProp} The following statements are equivalent:
\bi
\item[(i)] $A$ is biflat;
\item[(ii)] there is a net $\td: (A \wot A )^* \rightarrow A^* \ (\delta \in \Delta)$
of $A$-bimodule morphisms such that $(\td)_\delta$ is uniformly
bounded in ${\mathcal B} ((A \wot A )^*,  A^*) $ and
$W^*OT\text{-}\!\lim_\delta \td \circ \pi^* = id_{A^*}$;
\item[(iii)] there is a $w^*$-approximate $A$-bimodule morphism
 $\td: (A \wot A )^* \rightarrow A^* \ (\delta \in \Delta)$ such that $(\td)_\delta$ is uniformly bounded in ${\mathcal B} ((A \wot
A )^*,  A^*) $ and  $W^*OT\text{-}\!\lim_\delta \td \circ \pi^* =
id_{A^*}$. \ei \ep

\begin{proof} The implications $(i) \Rightarrow (ii)$ and $(ii)
\Rightarrow (iii)$ are trivial.  Let   $(\td)_\delta$ be a
$w^*$-approximate morphism satisfying the properties of statement
$(iii)$. As bounded subsets of ${\mathcal B} ((A \wot A )^*,  A^*)$ are
relatively $W^*OT$ compact, $(\td)_\delta$ has a $W^*OT$ limit
point, $\theta$; we may assume that $ W^*OT\text{-}\!\lim_\delta \td
= \theta$. Routine calculations show that $\theta$ is an
$A$-bimodule map such that $\theta \circ \pi^* = id_{A^*}$.
\endpf\end{proof}

\br  \label{OpBiflatRemark} \rm When $A$ is a quantized Banach
algebra, one can similarly
 prove an operator space version of Proposition
 \ref{BiflatnessProp}: \bi  \item[]  \it $A$ is operator biflat if and only if
there is a net $\td: (A \wot_{op} A )^* \rightarrow A^* \ (\delta
\in \Delta)$ of completely bounded $A$-bimodule morphisms such that
$\sup_\delta \| \td \|_{cb} < \infty$ and
$W^*OT\text{-}\!\lim_\delta \td \circ \pi^* = id_{A^*}$. \rm \ei \er

 By dropping the condition of uniform boundedness from statement
(ii)
 of Proposition \ref{BiflatnessProp},  we obtain our definition of
 (operator) approximate biflatness. Remark
 \ref{ApproxBiflatNotBiflatRem} gives examples of approximately
 biflat Banach algebras which are not biflat.

 \bd  \rm \label{approx biflat def}  We call a (quantized) Banach algebra, $A$,
  \it (operator) approximately biflat \rm if
  there is a net $\td: (A \wot A )^* \rightarrow A^*$ (respectively,
   $\td: (A \wot_{op} A )^* \rightarrow A^*$)
   $  (\delta \in \Delta)$
of  (completely) bounded $A$-bimodule morphisms such that
$W^*OT\text{-}\!\lim_\delta \td \circ \pi^* = id_{A^*}$.  \ed

 Note that statement (iii) in the following theorem agrees with statement (iii)
 of Proposition \ref{BiflatnessProp}, except that we have dropped
 the condition  of uniform boundedness.  Statement  (ii) may be seen
 as an approximate biprojectivity condition.

\bt \label{ApproxBiflat-PseudoProp} Consider the following
conditions for a Banach algebra $A$: \bi
\item[(i)] $A$ is pseudo-amenable;
\item[(ii)] there is an approximate $A$-bimodule morphism
$(\beta_\delta)$ from $A$ into $A \wot A$ such that $$ \| \pi \circ
\beta_\delta (a) - a \| \rightarrow 0 \qquad (a \in A);$$
\item[(iii)] there is a $w^*$-approximate $A$-bimodule morphism
 $\td: (A \wot A )^* \rightarrow A^* \ (\delta \in \Delta)$ such that
  $W^*OT\text{-}\!\lim_\delta \td \circ \pi^* = id_{A^*}$;

 \item[(iv)] $A$ is approximately biflat. \ei
Then $(i) \Rightarrow (ii) \Rightarrow (iii)$ and  if $A$ has a
central approximate identity, then $(iii) \Rightarrow (i)$.    If
$A$ has an approximate identity, then $(iv) \Rightarrow (i)$. \et

\begin{proof} Assuming that condition (i) holds, let
$(m_\delta)$ be an approximate diagonal for $A$. Then it is easy to
check that   $$\beta_\delta: A \rightarrow A \wot A : a \mapsto a \c
m_\delta$$ satisfies the properties of condition (ii).  The dual
maps $\td = \beta_\delta^*$ satisfy the conditions of statement
(iii).

Suppose that $\td: (A \wot A )^* \rightarrow A^* \ (\delta \in
\Delta)$ satisfies the conditions of statement (iii) and let
$(e_\lambda)_{\lambda \in \Lambda}$ be a central approximate
identity for $A$. Then for any $a  \in A$ and $\psi \in (A \wot
A)^*$
\begin{align*} \lim_\lambda \lim_\delta &\l \psi,  a \c
\td^*(e_\lambda) - \td^*(e_\lambda) \c a \r   =  \lim_\lambda
\lim_\delta \l e_\lambda,   \td( \psi \c a )  - \td ( a \c  \psi ) \r \\
&  =   \lim_\lambda \lim_\delta \l e_\lambda,   \td( \psi \c a )  -
\td (\psi) \c a + \td (\psi) \c a -  \td ( a \c  \psi ) \r \\
(*) & =    \lim_\lambda \lim_\delta \l e_\lambda,   \td( \psi \c a
)  - \td (\psi) \c a \r  + \l e_\lambda,  a \c  \td (\psi)  -  \td (
a \c \psi ) \r  \\
 & =   \lim_\lambda ( 0 + 0) = 0
\end{align*}
where we have used the
centrality of $(e_\lambda)$ at line $(*)$. Also, for $a \in A$ and
$\phi \in A^*$, \beqs \lim_\lambda \lim_\delta \l \phi,
\pi^{**}(\td^*(e_\lambda)) \c a \r & = &  \lim_\lambda \lim_\delta
\l e_\lambda , \td (\pi^* ( a \c  \phi))  \r \\
&  = &    \lim_\lambda \l e_\lambda ,  a \c  \phi  \r  =
\lim_\lambda \l e_\lambda a ,  \phi \r  \\
&  = &  \l a, \phi \r.  \eeqs
 Let $E= \Lambda \times \Delta^\Lambda$
be directed by the product ordering and for each $\beta = (\lambda,
(\delta_{\lambda'})) \in E$, let $m_\beta =
\theta_{\delta_\lambda}(e_\lambda) \in (A \wot A)^{**}$. Using the
iterated limit theorem \cite[p. 69]{Kel}, the above calculations
give for each $a$ in $A$
\begin{align} \label{DualApproxDiagEq}
a \c m_\beta - m_\beta \c a\rightarrow 0, \ w^* \ {\rm in } \ (A \wot A)^{**}  \\
{\rm and } \; \pi^{**}(m_\beta) a \rightarrow a, \ w^* \ {\rm in } \ A^{**}. \notag
\end{align}
 As in the proof of \cite[Proposition 2.3]{GZ} we can use
 Goldstine's theorem to obtain  $(m_\beta) $ in $A \wot A$, and we
 can replace weak$^*$ convergence in equation (\ref{DualApproxDiagEq}) by weak convergence.  This
 implies, via Mazur's theorem, that $A$ is pseudo-amenable (again
 see \cite[Propostion 2.3]{GZ}).

 The proof that $A$ is pseudo-amenable when $A$ is approximately biflat and has an approximate
 identity  $(e_\lambda)$ is the same as that given above, except that we reverse
 the order in which we calculate the iterated limits and use the
 fact that  each $\td$ is  now an $A$-bimodule map:
\beqs  \lim_\delta \lim_\lambda    a \c \td^*(e_\lambda) -
\td^*(e_\lambda) \c a  =  \lim_\delta \lim_\lambda   \td^*( a \c
e_\lambda  - e_\lambda  \c a) =  \lim_\delta 0 = 0 \eeqs and
\begin{align*}
 \lim_\delta  \lim_\lambda \l \phi, \pi^{**}(\td^*(e_\lambda)) \c a \r
& =   \lim_\delta \lim_\lambda \l e_\lambda a , \td (\pi^* ( \phi))
\r \\ &=    \lim_\delta  \l a ,  \td (\pi^* ( \phi) )  \r  =
 \l a, \phi \r.
 \end{align*}
This completes the proof. \endpf\end{proof}

One can similarly prove the analogous relationship between operator
pseudo-amenability and operator approximate biflatness.   Our
motivation in writing this paper has been to obtain information
about the approximate (co)homology of Segal algebras, so we will not
attempt to exhaustively determine the relationship between
approximate biflatness and other forms of amenability.   Instead, we
have  chosen  to only examine approximate biflatness versus
pseudo-amenability  (Theorem \ref{ApproxBiflat-PseudoProp}) and
refer the reader to
 \cite{GL} for a detailed
study of the relationship between pseudo-amenability and several
other amenability properties.  We will,  however, conclude this
section with an examination of some hereditary properties of
(approximately) biflat Banach algebras that are needed in the
sequel.

\bp \label{AbstractApproxBiflatHeredProp} Let $B$ be an (operator)
Segal algebra in $A$, and suppose that $B$ contains a net
$(e_\lambda)_{\lambda \in \Lambda}$ in its centre such that
$(e_\lambda^2)_{\lambda \in \Lambda}$ is an approximate identity for
$B$.
  If $A$ is
(operator) approximately biflat, then so is $B$. \ep

\begin{proof}  We will prove the operator space version
of the proposition -- the other case is  similar.   Let
 $T_\lambda$ be the completely bounded
 map specified by
$$T_\lambda : A \wot_{op} A \rightarrow B \wot_{op} B : a \otimes b \mapsto
 a e_\lambda \otimes b
e_\lambda.$$ As $e_\lambda$ is central in $B$, $T_\lambda$ is a
$B$-bimodule map. Let $\td: (A \wot A )^* \rightarrow A^* \ (\delta
\in \Delta)$ be a net of completely bounded $A$-bimodule maps such
that $W^*OT\text{-}\!\lim_\delta \td \circ \pi_A^* = id_{A^*}$, and
consider the completely bounded $B$-bimodule map,  $p: A^*
\rightarrow B^*: \phi \mapsto \phi \Big{|}_B$. Let $E = \Lambda
\times \Delta^\Lambda$ be directed by the product ordering, and for
each $\beta = (\lambda, (\delta_{\lambda'})_{\lambda'} ) \in E$,
define $\theta_\beta:(B \wot_{op} B)^* \rightarrow B^*$ so that the
diagram commutes:
\[
\xymatrix{ (A \wot_{op} A)^*
\ar@<.5ex>[rr]^{\theta_{\delta_\lambda}} & & A^* \ar[d]^{p}
  \\
(B \wot_{op} B)^* \ar[u]^{T_\lambda^*} \ar@<.5ex>[rr]^{\theta_\beta}
& & B^* }
\]
That is, $\theta_\beta = p \circ  \theta_{\delta_\lambda} \circ
T_\lambda^*$, a completely bounded $B$-bimodule map. Note that
because $e_\lambda$ lies in the centre of $B$,
$$T_\lambda^* \circ \pi_B^* (\phi) = \pi_A^* \circ R_\lambda^*
(\phi) \qquad (\lambda \in \Lambda, \ \phi \in B^*),$$ where
$R_\lambda:A\rightarrow B: a \mapsto a e_\lambda^2 $.   Let $\phi
\in B^*$, $b \in B$.  By the iterated limit theorem we have \beqs
\lim_\beta \l b, \theta_\beta \circ \pi_B^* (\phi) \r & = &
\lim_\lambda \lim_\delta \l b, (p \circ
\theta_{\delta} \circ T_\lambda^* \circ \pi_B^*) (\phi) \r \\
& = & \lim_\lambda \lim_\delta \l b,  (\theta_{\delta} \circ \pi_A^*
\circ R_\lambda^*) (\phi) \r \\
& = & \lim_\lambda  \l b,
 R_\lambda^* (\phi) \r \\
& = & \lim_\lambda  \l  b e_\lambda^2 ,
 \phi \r  \\
 & = & \l b, \phi \r.
\eeqs Hence, $W^*OT\text{-}\!\lim_\beta \theta_\beta \circ \pi_{B^*}
= id_{B^*}$.  \endpf\end{proof}

Note that if $(e_\lambda)_\lambda$ is an approximate identity which
is bounded in the multiplier norm on $B$, then
$(e_\lambda^2)_\lambda$ is also an approximate identity for $B$.

\bd \rm The   \it (operator) biflatness constant \rm  of an
(operator) biflat  (quantized) Banach algebra $A$ is  the number
$BF_A = \inf_\gamma \|\theta\|$ (respectively, $BF^{op}_A =
\inf_\gamma \|\theta\|_{cb}$) where the infimum is taken over all
(completely) bounded $A$-bimodule maps $\theta : (A \wot A)^*
\rightarrow A^*$ (resp. $\theta : (A \wot_{op} A)^* \rightarrow
A^*$) such that $\theta \circ \pi^* = id_{A^*}$. \ed

\bp \label{DirectedUnionProp} Let $A$ be a  (quantized) Banach
algebra containing a   directed family of closed ideals $\{
A_\gamma: \gamma \in \Gamma\}$ such that for each $\gamma \in
\Gamma$ there is a (completely) bounded homomorphic projection
$P_\gamma$ of $A$ onto $A_\gamma$. Suppose that either \bi
\item[(i)] $A$ has a central approximate identity
$(e_\lambda)_\lambda$ in $\cup_\gamma A_\gamma$; or
\item[(ii)] for each $a \in A$, $\| P_\gamma a - a \| \rightarrow
0$.

\bi
\item[(a)] If each $A_\gamma$ is (operator) approximately biflat, then so is $A$.
\item[(b)] If each $A_\gamma $ is (operator) biflat with $\sup_\gamma
BF_{A_\gamma}< \infty $ (respectively, $\sup_\gamma
BF_{A_\gamma}^{op} < \infty $), and  (i) holds with
$(e_\lambda)_\lambda$  bounded in the (completely bounded)
multiplier norm of $A$, or (ii) holds with $\sup_\gamma \|
P_\gamma\| < \infty$ (respectively, $\sup_\gamma \| P_\gamma\|_{cb}
< \infty$), then $A$ is (operator) biflat. \ei \ei \ep

\begin{proof} We first prove (a).   Given $\alpha = (F, \Phi, \epsilon)$ where $F \subset
A $, $\Phi \subset A^*$ are finite, and $\epsilon >0$, we will find
an $A$-bimodule map $\theta_\alpha: (A \wot A)^* \rightarrow A^* $
such that \beq \label{DirectedUnionEq0}|\l a, (\theta_\alpha \circ
\pi_A^* )(\phi) - \phi \r | < \epsilon  \qquad (a \in F, \ \phi \in
\Phi). \eeq Assuming first that condition (i) holds, take
$e_{\lambda_0} = e_0$ such that \beq \label{DirectedUnionEq1} \|a
e_0 - a \| < \epsilon /2M \qquad (a \in F) \eeq where $M = \sup \{
\| \phi \| : \phi  \in \Phi \}$. Choose  $\gamma_0 \in \Gamma$ such
that $e_0 \in A_{\gamma_0}$. Consider the maps $$ \iota_0 : \ao \wot
\ao \rightarrow A \wot A : a \otimes b \mapsto a \otimes b \ \ {\rm
and } \ \ T_0 : A \rightarrow \ao: a \mapsto a e_0$$ and let $\pi_0:
\ao \wot \ao \rightarrow \ao$ be the multiplication map.  As $\ao$
is approximately biflat, there is an $\ao$-bimodule map $\theta_0 :
(\ao \wot \ao)^* \rightarrow \ao^* $ such  that \beq
\label{DirectedUnionEq2}  |\l T_0a, (\theta_0 \circ \pi_0^* )(\phi
\big{|}_{\ao}) -  \phi \big{|}_{\ao} \r | < \epsilon/2  \qquad (a
\in F, \ \phi \in \Phi). \eeq Define $\theta_\alpha $ so that the
diagram commutes:
\[
\xymatrix{ (\ao \wot \ao)^* \ar@<.5ex>[rr]^{\theta_0} & & \ao^*
\ar[d]^{T_0^*}
  \\
(A \wot A)^* \ar[u]^{\iota_0^*} \ar@<.5ex>[rr]^{\theta_\alpha} & &
A^* }
\]
That is, let $\theta_\alpha = T_0^* \circ \theta_0 \circ \iota_0^*$.
For $a \in F$ and $\phi \in \Phi$, equations
(\ref{DirectedUnionEq1}) and (\ref{DirectedUnionEq2}) give \beqs |\l
a, (\theta_\alpha \circ \pi_A^* )(\phi) - \phi \r | & \leq & |\l
T_0a, (\theta_0 ( \iota_0^* ( \pi_A^* (\phi))) - \phi \r | + | \l
T_0a - a, \phi \r |
\\
& \leq & | \l T_0a,(\theta_0 \circ \pi_0^* )(\phi \big{|}_{\ao}) -
\phi \big{|}_{\ao} \r | + \| ae_0 - a\| \| \phi\|\\  & < & \epsilon.
 \eeqs

 If condition (ii) holds,  we instead choose $\gamma_0$ such that
  $\|P_{\gamma_0}a  - a \| < \epsilon /2M$ $(a \in
F)$.  By replacing $T_0$ in the above paragraph by $P_{\gamma_0}$ we
again obtain equation (\ref{DirectedUnionEq0}).

Because we only know that $\theta_0$ is an $\ao$-bimodule map, the
argument showing that that $\theta_\alpha$ is an $A$-bimodule map
requires some care. Note that \beqs \iota_0^*( a \c \psi) =
P_{\gamma_0} (a) \c \iota_0^* ( \psi) \qquad (a \in A, \  \psi \in
(A \wot A)^*) \eeqs where on the left and right we respectively have
  $A$-module, and $\ao$-module, actions. Let $a, b  \in A$, $\psi
\in (A \wot A)^*$ and assume first that $\theta_\alpha = T_0^* \circ
\theta_0 \circ \iota_0^*$. Then \beqs \l b, \theta_\alpha ( a \c
\psi) \r &
= & \l T_0 b , \theta_0 ( \iota_0^* (a \c \psi) ) \r \\
& = & \l T_0b  , \theta_0 ( P_{\gamma_0}(a) \c \iota_0^* ( \psi) )
\r \\
& = & \l T_0 b, P_{\gamma_0} (a) \c \theta_0 ( \iota_0^* ( \psi) )
\r \\
& = & \l  T_0 (b )P_{\gamma_0} (a), \theta_0 (  \iota_0^* (\psi) )
\r
\\& = & \l  T_0 (b  a), \theta_0 (  \iota_0^* (\psi) )
\r
\\
& = & \l b a, T_0^* (\theta_0 ( \iota_0^* (  \psi) ) ) \r \\
  & = & \l  b,  a \c \theta_\alpha  (  \psi)  \r,
 \eeqs where we have used the fact that $T_0 b P_{\gamma_0} a = P_{\gamma_0}((T_0 b) a)  =
 be_0 a = bae_0 = T_0(ba)$.  As well, $P_{\gamma_0} b P_{\gamma_0} a  = P_{\gamma_0}
 (ba)$, so the same argument works when $\theta_\alpha = P_{\gamma_0}^* \circ
\theta_0 \circ \iota_0^*$.
  A symmetric argument shows that
$\theta_\alpha$ is also a right $A$-module map. The operator
biflatness version of part (a) is proved in exactly the same way.

Under the hypotheses of the non-bracketed part of statement  (b),
the maps $\theta_\alpha$ can be chosen to be  uniformly bounded in
${\mathcal B} ((A \wot A)^*, A^*)$, so biflatness follows from
Proposition \ref{BiflatnessProp}.  If $A$ is a quantized Banach
algebra, then the bracketed hypotheses of statement (b) yield
completely bounded maps $\theta_\alpha$ in ${\mathcal CB} ((A \wot_{op}
A )^*, A^*) $ such that $\sup_\alpha \|\theta_\alpha\|_{cb} <
\infty$. Operator biflatness of $A$ follows from Remarks
\ref{OpBiflatRemark}.
\endpf\end{proof}

If $\{V_i:i\in I\}$ is a family of operator spaces, we let
$\bigoplus_{i\in I}^p V_i$ ($1\leq p<\infty$) have the operator
space structure it attains as the predual of the direct product of
dual spaces in the case $p=1$, and through interpolation in the case
$p>1$.  See \cite{Pis}.

  \bp \label{DirectSumProp} Let $\{ A_i: i \in I\} $ be a family of
 (quantized)
Banach algebras.  \bi \item[(i)] If each $A_i$ is (operator)
approximately biflat,  then for $1 \leq p < \infty$,
$\bigoplus_{i\in I}^p A_i$ is (operator) approximately biflat.

\item[(ii)] If $A_1$, $A_2$ are operator approximately biflat
quantized Banach algebras and $A= A_1 \oplus A_2$ has an operator
space structure such that the projection maps $A \rightarrow A_i$
are completely bounded, then $A$ is also operator approximately
biflat.

\item[(iii)]  If each $A_i$ is
(operator)  biflat and  $\sup_i BF_{A_i} < \infty$ (respectively
$\sup_i BF_{A_i}^{op}  < \infty$), then $\bigoplus_{i\in I}^1A_i$ is
(operator) biflat. \ei
 \ep

\begin{proof}  We first prove (ii).    Let $\alpha = (F, \Phi,
\epsilon)$, where $\epsilon > 0$, and $F \subset A$, $\Phi \subset
A^*$ are finite.  Let $\theta_i : (A_i \wot_{op} A_i)^* \rightarrow
A_i^*$ be a completely bounded  $A_i$-bimodule map such that
$$ |\l a_i, \theta_i \circ \pi_{A_i}^* (\phi_i) - \phi_i \r| <
\epsilon /2 \qquad ( i = 1,2, \ a= (a_1, a_2) \in F, \ \phi \in
\Phi)
$$ (where $\phi_i = \phi \Big{|}_{A_i}$).  Let $E_i : A_i \hookrightarrow A$
and $p_i : A \rightarrow A_i$ be the embedding and projection maps
and let $\widetilde{\theta_i} = p_i^* \circ \theta_i \circ(E_i
\otimes E_i)^* \ (i = 1,2)$. Thus, we have the commuting diagram:
\[
\xymatrix{ (A \wot_{op} A)^* \ar[d]_{(E_i \otimes
E_i)^*}\ar@<.5ex>[rr]^{\widetilde{\theta_i}} & & A^*
  \\
(A_i \wot_{op} A_i)^*  \ar@<.5ex>[rr]^{\theta_i} & & \ar[u]_{p_i^*}
A_i^* }
\]
Standard arguments show  that $\widetilde{\theta}=
\widetilde{\theta_1} + \widetilde{\theta_2} : (A \wot_{op} A)^*
\rightarrow A^* $ is a completely bounded $A$-bimodule map such that
$$ |\l a, \widetilde{\theta} \circ \pi_{A}^* (\phi) - \phi  \r| <
\epsilon \qquad (a \in F, \ \phi \in \Phi).
$$
This proves (ii). Obviously, the (non-quantized) Banach algebra
version of (ii) holds for arbitrary direct sums $A_1 \oplus A_2$.
Suppose further that  $A= A_1 \oplus^1 A_2$ is the (operator space)
$\ell^1$-direct sum of $A_1$ and $A_2$.  If each $A_i$ is (operator)
biflat and $\theta_i \circ \pi_{A_i}^* = id_{A_i^*}$, then observe
that $\widetilde{\theta} \circ \pi_{A}^* = id_{A^*}$ and
$\|\widetilde{\theta}\| \leq \max \{ \|\theta_1\|, \| \theta_2 \|
\}$ (respectively, $\|\widetilde{\theta}\|_{cb} \leq \max \{
\|\theta_1\|_{cb}, \| \theta_2 \|_{cb} \}$).

Suppose now that for each $ i \in I$, $A_i$ is (operator)
approximately biflat. Let $\Gamma = \{ \gamma : \gamma \subset I \
{\rm is \ finite} \}$ be ordered by inclusion.  By induction, the
first case shows that $A_\gamma = \bigoplus_{i\in \gamma}^pA_i$ is
(operator) approximately biflat. Viewing $A_\gamma$ as an ideal in
$A = \bigoplus_{i\in I}^pA_i$, the natural homomorphic projection
maps $P_\gamma$ of $A$ onto $A_\gamma$ are (completely) contractive
and satisfy $\| P_\gamma a - a \| \rightarrow 0$ $(a \in A)$. By
Proposition \ref{DirectedUnionProp}, $A$ is (operator) approximately
biflat. This is statement (i).

Finally, suppose that each $A_i$ is operator biflat with $\sup_{i\in
I}BF^{op}_{A_i} < \infty$.  As noted above,  $A_\gamma =
\bigoplus_{i\in \gamma}^1 A_i$ is operator biflat with
$BF_{A_\gamma}^{op} \leq \max_{i \in \gamma} BF_{A_i}^{op}$, so the
biflatness of $\bigoplus_{i\in I}^1A_i$ follows from Proposition
\ref{DirectedUnionProp}. This proves the operator space version of
(iii). The other case is similar.
\endpf\end{proof}


\section{Approximate biflatness and pseudo-amenability of $\so$}

Throughout this section, $\so$ will  denote an arbitrary  Segal
algebra in $\lo$, where $G$ is a locally compact group. Observe that
because $\so$ embeds contractively onto a dense subspace of $\lo$,
$\li$ in turn embeds contractively into $\so^*$ via
$$ \l f, \phi \r = \int_G f(s) \phi(s) \  ds \qquad (f \in \so, \phi \in \li).$$

\bt  \label{SOPseudoAmThm} If $\so$ is pseudo-amenable, then $G$ is
amenable. \et

\begin{proof}  Let $(m_\gamma)_{\gamma \in \Gamma} \subset \so \wot
\so$ be an approximate diagonal for $\so$. Let $\iota : \so
\hookrightarrow \lo$ be the embedding map, let $1_G$ be the
augmentation character of $L^1(G)$, and put $$T = \iota \otimes 1_G:
\so \wot \so \rightarrow \lo : f \otimes g \mapsto (\int_G g(s)ds )
f.$$  By checking with elementary tensors, one can see that $T$
satisfies $$ T(k \cdot m ) = k * Tm  \ \ {\rm and } \ \ T(m \c k ) =
(\int_G k(s)ds ) Tm $$ where $k  \in \so, \  m \in \so \wot \so$.
Hence, for any $ k \in \so$ with $ \int_G k(s)ds = 1$, we have
\begin{align} \label{SOPseudoAmThmEq1}
\| k* T m_\gamma - Tm_\gamma \|_{\lo} &= \| T ( k \c m_\gamma - m_\gamma
\c k) \|_{\lo} \\ & \leq \| k \c m_\gamma - m_\gamma \c k \| \rightarrow
0. \notag
\end{align}
Fix $h \in \so$ with $\int h = 1$, and for each $\gamma$
let $f_\gamma = h * Tm_\gamma$.  For each $x \in G$ we then obtain
\begin{align} \label{SOPseudoAmThmEq} \| \delta_x * f_\gamma - f_\gamma
\|_{\lo} &\leq \| (\delta_x*h) * T m_\gamma - Tm_\gamma \|_{\lo}  \\
&\qquad + \|T m_\gamma - h*Tm_\gamma \|_{\lo} \rightarrow 0. \notag
\end{align}
When $m =
f\otimes g$, note that $$\l 1_G, \pi (m) \r = \l 1_G, f *g\r =  \l
1_G, f \r\l 1_G, g\r = \l 1_G, \l 1_G, g \r f \r = \l 1_G, Tm\r, $$
and so \beqs 1 & = & \l 1_G, h \r  \\ &=& \lim_\gamma \l 1_G, h *
\pi(m_\gamma)\r \\ & =&  \lim_\gamma \l 1_G, h \r  \l 1_G,
\pi(m_\gamma)\r  \\ &= & \lim_\gamma \l 1_G, T m_\gamma \r \\ &=
&\lim_\gamma \l 1_G, h * T m_\gamma \r \\
& =&  \lim_\gamma \l 1_G, f_\gamma\r. \eeqs As $\|f_\gamma\|_{\lo}
\geq |  \l 1_G, f_\gamma\r|$ we may therefore assume that
$\|f_\gamma\|_{\lo} \geq 1/2$  $( \gamma \in \Gamma)$.  Defining
$$g_\gamma = \frac{1}{\|f_\gamma\|_{\lo}}|f_\gamma|, \qquad (\gamma
\in \Gamma), $$ we obtain a net of positive norm-one functions in
$\lo$ which by (\ref{SOPseudoAmThmEq1}) satisfies $$\| \delta_x *
g_\gamma - g_\gamma \|_{\lo} \leq 2 \| \delta_x * |f_\gamma| -
|f_\gamma| \|_{\lo} \leq 2 \| \delta_x * f_\gamma - f_\gamma
\|_{\lo} \longrightarrow 0$$ for $x \in G$. This implies that $G$ is
amenable \cite{Pat} -- any $w^*$-limit point of $(g_\gamma)_\gamma$
in $\li^*$ is a left-invariant mean on $\li$.
\endpf\end{proof}

\bc \label{SINCor}Let $G$ be a [SIN]-group.  Then  the following
statements are equivalent: \bi
\item[(i)] $G$ is amenable;
\item[(ii)] $\so$ is approximately biflat;
\item[(iii)] $\so$ is pseudo-amenable.  \ei \ec

\begin{proof}  If statement (i) holds, then $L^1(G)$ is amenable and therefore biflat
\cite[Theorem 2.9.65]{Dal}, and $S^1(G)$ has a central approximate
identity $(e_\lambda)_\lambda$ which is bounded in $L^1(G)$
\cite{KR}. Hence, $(e_\lambda^2)_\lambda$ is also an approximate
identity for $\so$, so (ii) is a consequence of Proposition
\ref{AbstractApproxBiflatHeredProp}. That (ii) implies (iii) and
(iii) implies (i) are special cases of Theorems
\ref{ApproxBiflat-PseudoProp} and  \ref{SOPseudoAmThm} respectively.
 \endpf\end{proof}

Proposition 4.4 of \cite{GZ} states that the converse to Theorem
\ref{SOPseudoAmThm} holds when $\so$ has an approximate identity
which ``approximately commutes with orbits''.  When $G$ is a
[SIN]-group, $\so$ always has such an approximate identity so $(i)
\Rightarrow (iii)$ of Corollary \ref{SINCor} is also a consequence
 \cite[Proposition 4.4]{GZ}.

We do not know whether, in general, the amenability of $G$ implies
either approximate bilfatness or pseduo-amenability of $\so$ (see
also \cite[Question 3, P. 123]{GZ}). However, as we show below, it
is possible to construct a well-behaved approximate diagonal for
$\so$ in $L^1(G)\widehat{\otimes} \so$ when $G$ is amenable.

We say that $\so$ has an approximate diagonal in
$L^1(G)\widehat{\otimes} \so$ if there is a net $\{ m_\gamma
\}_{\gamma \in \Gamma}$ in $L^1(G)\widehat{\otimes} \so$ such that,
for every $f\in \so$,
$$f\cdot m_\gamma-m_\gamma \cdot f \longrightarrow 0 \ \text{as} \ \gamma\longrightarrow \infty$$
and $\pi(m_\gamma)$ is an approximate identity for $\so$. If, in
addition, the associated left and right multiplication operators
$L_\gamma : f \mapsto f\cdot m_\gamma $ and $R_\gamma : f \mapsto
m_\gamma\cdot f$ from $\so$ into $L^1(G)\widehat{\otimes}\so$ are
uniformly bounded, then we say that $\so$ has a multiplier bounded
approximate diagonal in $L^1(G)\widehat{\otimes} \so$. Finally, in
either of the above cases, we say that the (multiplier-bounded)
approximate diagonal is central if $f\cdot m_\gamma=m_\gamma \cdot
f$ for all $\gamma \in \Gamma$ and $f\in \so$.

\bt \label{T:pseudo-biflat-approx diagonal} Let $G$ be a locally
compact group, and let $\so$ be a symmetric Segal algebra. Then the
following statements are equivalent:

\item[(i)] $G$ is amenable;
\item[(ii)] $\so$ is a flat $\lo$-bimodule;
\item[(iii)] $\so$ has an approximate diagonal in  $\lo \widehat{\otimes} \so$.
\item[(iv)] $\so$ has a multiplier-bounded approximate diagonal in  $\lo \widehat{\otimes} \so$.

\et

\begin{proof}
(i) $\Longrightarrow$ (ii) Since $G$ is amenable, $\lo$ is amenable.
Also $\so$ is an essential Banach $L^1(G)$-bimodule. Hence if
$\pi_1$ is the convolution multiplication map from $\lo
\widehat{\otimes} \so$ onto $\so$, then the short exact sequence of
$\lo$-bimodules
$$ 0 \longmapsto \so^* \stackrel{\pi_1^*}{\longrightarrow} (\lo \widehat{\otimes} \so)^* \stackrel{\iota^*}{\longrightarrow} (\ker \pi_1)^* \longmapsto 0,$$
is admissible, and  therefore splits \cite[Theorem 2.5]{CL}.

(ii) $\Longrightarrow$ (iv)  Let $\theta : (\lo \widehat{\otimes}
\so)^* \to \so^*$ be a continuous $\lo$-bimodule morphism such that
$\theta \circ \pi^*=id_{\so^*}$. Let $\{e_\alpha \}$ be an
approximate identity for $\so$ with $L^1$-norm equal to 1. Set
$n_\alpha=\theta^*(e_\alpha^2)\in (\lo \widehat{\otimes} \so)^{**}$.
Then, for every $f\in \so$ and  every $\alpha$,
$$\| f\cdot n_\alpha \|=\| \theta^*(f*e_\alpha^2) \| \leq \|\theta \| \|f*e_\alpha^2\|_{\so} \leq
\|\theta \| \|f \|_{\so}. $$ Similar to the above, we have $\|
n_\alpha\cdot f \| \leq \|\theta \| \|f\|_{\so}$. Also
$$\pi^{**}(n_\alpha)=(\pi^{**} \circ \theta^*)(e_\alpha^2)=e_\alpha^2,$$
which is an approximate identity for $\so$. Finally, for $f\in \so$
and $\varphi \in (\lo \widehat{\otimes} \so)^*$, we have \beqs \l
f\cdot n_\alpha -n_\alpha \cdot f \ , \ \varphi \r  &=& \l
\theta^*(e_\alpha^2) \ , \ \varphi \cdot f-f\cdot \varphi \r \\ & =&
\l \theta(\varphi) \cdot f-f\cdot \theta(\varphi) \ , \ e_\alpha^2
\r  \\ &= & \l \theta(\varphi) \ , \ f* e_\alpha^2-e_\alpha^2* f
\r. \eeqs Hence
$$\| f\cdot n_\alpha -n_\alpha \cdot f \| \leq \|\theta \| \| f* e_\alpha^2-e_\alpha^2* f \|_{\so} .$$
Therefore $f\cdot n_\alpha -n_\alpha \cdot f \rightarrow \infty$ as
$\alpha \rightarrow \infty$.
The final result follows from a similar argument to the one made in \cite[Proposition 2.3]{GZ}.\\
(iv) $\Longrightarrow$ (iii) is obvious and
   (iii) $\Longrightarrow$
(i) follows  the  argument found in the proof of  Theorem
\ref{SOPseudoAmThm}.
\endpf\end{proof}

The following is \cite[Theorem 3.1]{GLau}. Here we present an
alternative proof using the multiplier bounded approximate
diagonals.

\bt \label{GhahLauThm}
Let $G$ be a locally compact amenable group, let $\so$ be a
symmetric Segal algebra, and let $X$ be a Banach $\lo$-bimodule.
Then for every continuous derivation $D : \so \to X^*$, there is a
continuous double centralizer $(S,T)$ such that $D=S-T$.
\et

\begin{proof}
Suppose that $D : \so \to X^{*}$ is a continuous derivation. By
applying the argument presented in the first two paragraphs of the
proof of \cite[Theorem 3.1(ii)]{GLau}, we can assume that $X$ is an
essential $\lo$-bimodule. By the proof of Theorem
\ref{T:pseudo-biflat-approx diagonal}(iv), we can choose a
multiplier-bounded approximate diagonal  $\{ m_\alpha \}$ for $\so$
in $\lo \widehat{\otimes} \so$ so that $\pi(m_\alpha)$ is bounded in
$L^1$-norm.

Let $m_\alpha=\sum_{i=1}^\infty f_i^\alpha \otimes g_i^\alpha$ and
define $x^*_\alpha=\sum_{i=1}^\infty f_i^\alpha \c D(g_i^\alpha)$.
Then, for $f\in \so$,
$$ f\cdot x^*_\alpha -x^*_\alpha \cdot f-\pi(m_\alpha)\c D(f)
\stackrel{\alpha}{\longrightarrow} 0. \eqno{(1)}$$ On the other
hand, the operators $S_\alpha : f \mapsto f\cdot x^*_\alpha $ and
$T_\alpha : f \mapsto x^*_\alpha\cdot f$ from $\so$ into $X^*$ are
uniformly bounded. Let $S$ be a cluster point of $\{S_\alpha\}$, and
let $T$ be a cluster point of $\{T_\alpha\}$ in the
weak$^*$-operator topology. Then $(S,T)$ is a double centralizer and
for every $f\in \so$ and $\xi \in X$,
$$ \l \xi, S(f)-T(f)- D(f) \r = \lim_\alpha \l \xi,  S_\alpha(f)
- T_\alpha (f) - \pi(m_\alpha)\c D(f) \r = 0,
$$ where we have used equation (1) and  the fact that $X$ is essential.
\endpf\end{proof}

It is shown in \cite[Theorem 4.5]{GZ} that $\so$ is
pseudo-contractible if $G$ is compact. In the following theorem, we
prove the converse of that result and present other equivalent
conditions on pseudo-contractiblity of $\so$ (see also
\cite[Proposition 3.8]{GZ}).

\bt \label{CpctVsPseudoContrThm} Let $G$ be a locally compact group,
and let $\so$ be a Segal algebra. Then the following statements are
equivalent:

\item[(i)] $G$ is compact;
\item[(ii)] $\so$ has a central approximate diagonal in  $\lo \widehat{\otimes} \so$;
\item[(iii)] $\so$ is pseudo-contractible.

If, in addition, $\so$ is symmetric, then the above statements are
equivalent to either of the following statements:

\item[(iv)] $\so$ is a projective $\lo$-bimodule;
\item[(v)] $\so$ has a central multiplier-bounded approximate diagonal in  $\lo \widehat{\otimes} \so$. \et

\begin{proof}
(i) $\Longrightarrow$ (iii) This is \cite[Theorem 4.5]{GZ}.

(iii) $\Longrightarrow$ (ii) We note that from \cite{zhg2}, $\so$ is
(boundedly) approximately complemented in $\lo$. Hence the map
$\iota \otimes id_{\so} :  \so \widehat{\otimes} \so \longrightarrow
\lo \widehat{\otimes} \so$ is injective \cite{zhg1}. Therefore
$\iota \otimes id_{\so} $ maps a central approximate diagonal in
$\so \widehat{\otimes} \so$ into a central approximate diagonal in
$\lo \widehat{\otimes} \so$.

(ii) $\Longrightarrow$ (i) If $\so$ has a central approximate
diagonal in  $\lo \widehat{\otimes} \so$, then  a similar argument
to the proof of Theorem \ref{SOPseudoAmThm} gives a non-zero
function $f \in \lo$ such that $\delta_x *f = f$ $(x \in G)$. This
implies that $f$ is almost everywhere equal to a non-zero constant
and it follows that $G$ is compact.

(i) $\Longleftrightarrow$ (iv) If $G$ is compact, then $\lo
\widehat{\otimes} L^1(G^{op})=L^1(G \times G^{op})$ is biprojective
\cite[IV, Theorem 5.13]{Hel}. Hence $\so$ is a projective
$\lo$-bimodule since it can be regarded as a Banach left $\lo
\widehat{\otimes} L^1(G^{op})$-module \cite[IV, Theorem 5.3]{Hel}.

Conversely, suppose that $\so$ is a projective $\lo$-bimodule. Hence
there is a continuous $\lo$-bimodule morphism $\rho : \so
\longrightarrow \lo \widehat{\otimes} \so$ such that $\pi \circ
\rho=id_{\so}$. Let $1_G$ be the augmentation character of $L^1(G)$,
and put $$T = \iota \otimes 1_G: \lo \wot \so \rightarrow \lo : f
\otimes g \mapsto (\int_G g(s)ds ) f.$$ Now define the operator
$\rho_1 : \so \longrightarrow \lo$ by
$$ \rho_1=T\circ \rho.$$ It is easy to check that $\rho_1$ is a continuous
$\lo$-bimodule morphism. Moreover, for $f\in \so$ and $g\in I_0=\ker
1_G \cap \so$, we have
$$\rho_1(f*g)=\rho_1(f)\cdot g=\rho_1(f)1_G(g)=0.$$
Hence $\rho_1=0$ on $I_0$ since $\so I_0$ is dense in $I_0$.
Therefore $\rho_1$ induces a left $\lo$-module morphism
$$\tilde{\rho} : \so /I_0 \longrightarrow \lo .$$
However, $\so /I_0$ is isomorphic with $\C$ as a Banach $\lo$-module
for the product defined by
$$ f\cdot \lambda= \lambda \cdot f= 1_G(f)\lambda  \ \ \ (f\in \lo , \lambda\in \C).$$
Moreover, with the above identification, $1_G \circ
\tilde{\rho}=id_{\C}$. Thus $\C$ is a projective left
$\lo$-bimodule. This implies that $G$ is compact (see, for example,
\cite[Theorem 3.3.32(ii)]{Dal}.


(i) $\Longleftrightarrow$ (v) If $G$ is compact, then $\so$ has a
central approximate identity $\{e_\alpha \}$ which has $L^1$-norm
equal to 1. On the other hand, from (iv), there is a continuous
$\so$-bimodule morphism $\theta : \so  \longrightarrow  \lo
\widehat{\otimes} \so$ which is the right inverse to the convolution
multiplication $\pi_1 :  \lo \widehat{\otimes} \so  \longrightarrow
\so$. Thus if we put $m_\alpha=\theta(e_\alpha)$, then it is
straightforward to show that $\{m_\alpha \}$ is a central
multiplier-bounded approximate diagonal in  $\lo \widehat{\otimes}
\so$ for $\so$. The converse follows easily because (v) implies
(iii).
\endpf\end{proof}

It is shown in \cite{GLau} that  every continuous derivation from a
symmetric Segal algebra $\so$ into $\so^*$ is approximately inner
whenever $G$ is an amenable group or a SIN group. The following
theorem is parallel to those results.

\bt \label{wstarapproxinner}
Let $G$ be a locally compact group, and let $\so$ be a symmetric
Segal algebra. Then for every continuous derivation $D : \so \to
\so^*$, $\pi^*\circ D$ is $w^*$-approximately inner. \et

\begin{proof}
Let $D : \so \to \so^*$ be a continuous derivation. Define the
operator $ \tilde{D} : \lo \to (\so \widehat{\otimes} \so)^*$ by
$$\l \D(f) \ , \ g\otimes h \r =\l D(f*g)-fD(g) \ , \ h \r \ \ (f\in \lo, g,h\in \so).$$
Since $D$ is a derivation, it is straightforward to verify that $\D$
is a continuous derivation. Let $\{e_\alpha\}_{\alpha \in I}$ be an
approximate identity in $\so$ having $L^1$-norm equal to 1. Define
the operator $\Lambda_\alpha : (\so \widehat{\otimes} \so)^* \to
L^\infty(G\times G)$ by
$$\Lambda_\alpha(T)(f\otimes g)=T(f*e_\alpha \otimes e_\alpha*g),$$
for every $T\in (\so \widehat{\otimes} \so)^*$ and $f,g\in \lo$.
Clearly each $\Lambda_\alpha$ is a continuous $\lo$-bimodule
morphism. Hence $\Lambda_\alpha \circ \D$ is a continuous derivation
from $\lo$ into $L^\infty(G\times G)$, and so, it is inner
(\cite[Theorem 5.6.41]{Dal}, in the case where $E=L^1(G\times G)$).
This means there is $\varphi_\alpha \in L^\infty(G\times G)$ such
that
$$ \Lambda_\alpha \circ \D=ad_{\varphi_\alpha} \ \ (\alpha \in I).$$
Let $\iota : \so \to \lo$ be the inclusion map and put
$\psi_\alpha=(\iota \otimes \iota)^*(\varphi_\alpha)$. Then
$$ (\iota \otimes \iota)^* \circ \Lambda_\alpha \circ \D=\ad_{\psi_\alpha} \ \ (\alpha \in I). \eqno{(1)}$$
However, since $\| e_\alpha \|_1=1$, it follows that for every $T\in
(\so \widehat{\otimes} \so)^*$ and $g,h\in \so$ \beqs | \l (\iota
\otimes \iota)^* \circ \Lambda_\alpha(T) \ , \ g\otimes h \r | & =&
| \l \Lambda_\alpha(T) \ , \ g\otimes h \r | \\ &=& | \l T \ , \
g*e_\alpha\otimes e_\alpha *h \r | \\ &\leq & \|T\|\| g*e_\alpha
\|_{\so}\| e_\alpha *h \|_{\so} \\ &\leq& \|T\|\| g\|_{\so}\|h
\|_{\so}. \eeqs Thus $ \| (\iota \otimes \iota)^* \circ
\Lambda_\alpha \| \leq 1$, and so,
$$ \| (\iota \otimes \iota)^* \circ \Lambda_\alpha \circ \D \| \leq 2 \|D\|.$$
Hence there is $\Delta \in B(\lo, (\so \otimes \so)^*)$ such that
$(\iota \widehat{\otimes} \iota)^* \circ \Lambda_\alpha \circ \D
\rightarrow \Delta$ in the $W^*OT$ of $B(\lo, (\so \widehat{\otimes}
\so)^*)$. Now take $f,g,h\in \so$. Then \beqs \l \Delta(f) \ , \
g\otimes h \r  &=& \lim_\alpha \l (\iota \otimes \iota)^* \circ
\Lambda_\alpha \circ \D (f) \ , \ g\otimes h \r \\ &
=& \lim_\alpha \l \D(f) \ , \ g*e_\alpha\otimes e_\alpha *h \r  \\
&= & \l \D(f) \ , \ g\otimes h \r \\ &=& \l D(f) \ , \ g*h \r \\ &=&
\l \pi^*\circ D(f) \ , \ g\otimes h \r. \eeqs Hence $\Delta \circ
\iota =\pi^*\circ D$. Therefore, from (1), it follows that
$$\pi^*\circ D=W^*OT-\lim_\alpha \ad_{\psi_\alpha}.$$
\endpf\end{proof}

\section{Approximate biflatness and pseudo-amenability of $\sa$}

In the preceding section we saw that the pseudo-amenablity of a
Segal algebra $\so$  in $\lo$, implies that $G$, and hence  $\lo$,
is amenable.  In this section we  prove that (operator) approximate
biflatness, and therefore pseudo-amenability,  of the (operator)
Segal algebra $\sa$ is much weaker than the (operator) amenability
of $A(G)$ (Theorems \ref{S1APseudoThm} and \ref{S1AOpPseudoThm}). On
the other hand,  the next theorem shows that the dual version of
Theorem \ref{CpctVsPseudoContrThm} is true.

If $F(G)$ is any collection of continuous functions on $G$, we let
$F_c(G)$ denote the set of compactly supported functions in $F(G)$.

 \blem \label{SA(G)PseudoContractLem}  Let $SA(G)$ be a Segal algebra in $A(G)$.  \bi
\item[(i)]  If $SA(G)$ has an approximate identity, then $SA_c(G)$
is dense in $SA(G)$.
\item[(ii)] If $G$ is discrete, then  the
indicator function at $g\in G$, $\delta_g$, belongs to $SA(G)$. \ei
\elem

\begin{proof} Let $u \in SA(G) $, $\epsilon >0$.   Take $e \in
SA(G)$  such that $\| ue - u \|_{SA} < \epsilon /2$. Choosing  $e_0
\in A_c(G)$ such that $\| e - e_0\|_A < \epsilon/ (2 \|u\|_{SA})$ we
have  $ue_0 \in SA_c(G)$ and
$$ \|ue_0 - u \|_{SA} \leq \| ue_0 - ue\|_{SA} + \|ue -u\|_{SA} \leq
\|u\|_{SA}\|e_0 - e\|_A + \epsilon/2 < \epsilon.$$ This proves (i).
If $G$ is discrete, then for $g \in G$, $\delta_g \in A(G)$ and we
can choose $u \in SA(G)$ such that $\| u - \delta_g\|_A < 1/2$. Then
$|u(g) - 1| < 1/2$, so $u(g) \neq 0$.  Now $\displaystyle \delta_g =
{1 \over u(g)} u \delta_g \in SA(G)$  proving statement  (ii).
\endpf\end{proof}

\bt \label{SA(G)PseudoContractThm} Let $SA(G)$ be an (operator)
Segal algebra of $A(G)$.   Then the following statements are
equivalent: \bi \item[(i)] $SA(G)$ has an approximate identity and
$G$ is discrete;   \item[(ii)]$SA(G)$ has an approximate identity
and  is (operator) approximately biprojective;
\item[(iii)] $SA(G)$ is (operator) pseudo-contractible. \ei  \et

\begin{proof} We prove the operator space version of the theorem.
Suppose that $G$ is discrete and that $SA(G)$ has an approximate
identity $(e_\lambda)_{\lambda \in \Lambda}$.  By Lemma
\ref{SA(G)PseudoContractLem}, we may assume that each $e_\lambda$
has compact support $E_\lambda$, and we can define $m_\lambda \in
SA(G) \wot_{op} SA(G)$ by $$ m_\lambda = \sum_{x \in E_\lambda}
e_\lambda(x) (\delta_x \otimes \delta_x) \qquad ( \lambda \in
\Lambda).$$ It is clear that $ a \c m_\lambda = m_\lambda \c a \ (a
\in SA(G))$ and $\pi (m_\lambda) = e_\lambda \ (\lambda \in
\Lambda)$, an approximate identity.  Hence, $SA(G)$ is operator
pseudo-contractible.

Assuming that  $SA(G)$ is operator pseudo-contractible, let
$(m_\alpha)_\alpha$ be an operator approximate diagonal for $SA(G)$
such that $a \c m_\alpha = m_\alpha \c a \ ( a \in SA(G))$. Let $T=
id_{SA(G)} \otimes \lambda(e) : SA(G) \wot_{op} SA(G) \rightarrow
SA(G)$ where $\lambda(e)$ is the (completely) bounded functional on
$SA(G)$ defined by $\lambda(e)u = u(e)$. By checking with elementary
tensors $m = u \otimes v$, one sees that  $T(a \c m) = a Tm$, $T(m
\c a ) = a(e) Tm$ $(a \in SA(G))$, and $Tm(e) = \pi(m)(e)$. Hence,
we can choose $\psi = Tm_\alpha$ such that $$ \psi = \psi a \quad (
a \in SA(G), \ a(e) = 1) \ \ {\rm and } \ \ \psi(e) \neq 0.$$ The
remainder of the proof is similar to the proof of \cite[Proposition
5]{Ren}.  Let $g \in G$ and choose $v \in A(G)$ such that $v(g) =0$,
$v(e) = 1$, and take $a \in SA(G)$ such that $a(e) = 1$. Then $av
\in SA(G)$ satisfies $av(e) =1$, so $ 0 = av\psi(g) = \psi(g)$.
Hence, $\displaystyle \delta_e = {1 \over \psi(e)} \psi$, which is a
continuous function on $G$. Hence, $G$ is discrete. The equivalence
of statements (ii) and (iii) is a special case of (the operator
space version of) \cite[Proposition 3.8]{GZ}.
\endpf\end{proof}

\begin{lem} \label{cbS1A(G)Lem} Let $F: \sah \longrightarrow \sa$ be a linear map with a
completely bounded extension $F^A: A(H) \longrightarrow A(G)$ and
(completely) bounded extension $F^L : L^1(H)  \longrightarrow \lo$.
Then $F$ is itself completely bounded.
\end{lem}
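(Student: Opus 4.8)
The plan is to exploit the description of $\sa = A(G)\cap\lo$ as an intersection operator space and to combine the two given extensions blockwise, at each matrix level. Recall from \cite{FSW} that the operator space structure on $\sa$ is the one it carries as the intersection of $A(G)$ and $\lo$: concretely, at each matrix level $M_n(\sa) = M_n(A(G))\cap M_n(\lo)$ with $\|[v_{ij}]\|_{M_n(\sa)} = \max\{\|[v_{ij}]\|_{M_n(A(G))},\,\|[v_{ij}]\|_{M_n(\lo)}\}$, and similarly for $\sah$ over $H$. In particular the inclusions $\sa\hookrightarrow A(G)$, $\sa\hookrightarrow\lo$ and their analogues over $H$ are complete contractions, so each of the $M_n(A(H))$- and $M_n(L^1(H))$-norms is dominated by the $M_n(\sah)$-norm. (Whether one normalizes the intersection by a max or a sum is immaterial here, since the two give completely isomorphic structures and hence the same class of completely bounded maps.)

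First I would observe that the hypotheses force the maps to agree where their domains overlap: since $\sah$ sits inside both $A(H)$ and $L^1(H)$ and $F^A$, $F^L$ extend $F$, we have $F^A_n[v_{ij}] = F_n[v_{ij}] = F^L_n[v_{ij}]$ for every $[v_{ij}]\in M_n(\sah)$, these being read respectively in $M_n(A(G))$, $M_n(\sa)$ and $M_n(\lo)$. Consequently
\beqs
\|F_n[v_{ij}]\|_{M_n(\sa)} &=& \max\{\|F^A_n[v_{ij}]\|_{M_n(A(G))},\ \|F^L_n[v_{ij}]\|_{M_n(\lo)}\} \\
&\leq& \max\{\|F^A\|_{cb}\,\|[v_{ij}]\|_{M_n(A(H))},\ \|F^L\|_{cb}\,\|[v_{ij}]\|_{M_n(L^1(H))}\} \\
&\leq& \max\{\|F^A\|_{cb},\ \|F^L\|_{cb}\}\,\|[v_{ij}]\|_{M_n(\sah)}.
\eeqs
Taking the supremum over $n$ then yields $\|F\|_{cb}\le\max\{\|F^A\|_{cb},\|F^L\|_{cb}\}<\infty$, which is the desired conclusion.

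The one point requiring justification is why the assumption that $F^L$ is merely \emph{bounded} is enough. Here I would invoke the fact, recorded in the preliminaries, that $\lo$ (and likewise $L^1(H)$) carries the maximal operator space structure as the predual of a commutative von Neumann algebra. By the universal property of the maximal structure, every bounded linear map out of a maximal operator space is automatically completely bounded with $\|F^L\|_{cb}=\|F^L\|$; this is exactly the reason for the bracketed ``(completely)'' in the statement, and it guarantees that the term $\|F^L\|_{cb}$ appearing in the estimate above is finite.

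The main obstacle is conceptual rather than computational: one must correctly identify the operator space structure of $\sa$ as the matrix-level max-intersection, so that a single completely-bounded-norm estimate decouples into two independent estimates governed separately by $F^A$ and $F^L$. Once the intersection structure and the maximality of $\lo$ are in place, the remainder is the direct matrix comparison displayed above.
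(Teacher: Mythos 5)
Your proof is correct and takes essentially the same approach as the paper: the paper by definition realizes $\sa$ inside $A(G)\oplus_1 L^1(G)$ via $u\mapsto(u,u)$ and restricts $F^A\oplus F^L$ to $\sah$, which is exactly your blockwise matrix-norm estimate written in the sum (rather than max) normalization of the intersection structure — a difference you correctly note is immaterial since the two structures are completely isomorphic. Your explicit appeal to the maximality of the operator space structure on $L^1(H)$, to upgrade $F^L$ from bounded to completely bounded, is the same fact the paper uses implicitly when it treats $F^L$ as completely bounded.
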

\begin{proof}  By definition, $\sa$ inherits its operator space
structure via the embedding
$$\sa \hookrightarrow A(G) \oplus_1 L^1(G): u \mapsto (u,u)$$
\cite[p.4]{FSW}. As $F^A$ and $F^L$ are completely bounded, so is
$$F^A \oplus F^L: A(H) \oplus_1 L^1(H) \rightarrow A(G) \oplus_1
L^1(G)$$ with $\|F^A \oplus F^L\|_{cb} \leq \|F^A\|_{cb}
+\|F^L\|_{cb}$.  Hence $F = (F^A \oplus F^L)\big{|}_{\sah} $ is also
completely bounded.
 \endpf\end{proof}

The ``completely bounded" part of the next lemma will not be needed,
but may be of independent interest.

\begin{lem} \label{S1AMultAILem} If $A(G)$ has an approximate
identity which is bounded in the (completely bounded) multiplier
norm, then so does $\sa$.
\end{lem}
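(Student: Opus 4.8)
The plan is to transfer a multiplier-bounded approximate identity from $A(G)$ to $\sa$ by combining it with any fixed approximate identity that $\sa$ already possesses, using the fact that $\sa = A(G) \cap \lo$ carries the operator space structure coming from the embedding $\sa \hookrightarrow A(G) \oplus_1 \lo$. Let $(u_\gamma)$ be an approximate identity for $A(G)$ bounded by some constant $C$ in the (completely bounded) multiplier norm. The difficulty is that the $u_\gamma$ need not lie in $\sa$ at all, and even if they did, multiplier-boundedness in $A(G)$ does not immediately give multiplier-boundedness in the finer $\sa$-norm, which also sees the $\lo$-component.

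First I would observe that for any fixed $v \in \sa$, the multiplication operator $M_{u_\gamma} \colon w \mapsto u_\gamma w$ is bounded on $\sa$ with $\sa$-operator-norm controlled by the multiplier norm of $u_\gamma$ on \emph{both} $A(G)$ and $\lo$. Since $u_\gamma$ is bounded in the $A(G)$-multiplier norm by hypothesis, and since $A(G)$ multipliers are automatically $\lo$-multipliers in a controlled way here (pointwise multiplication by a bounded function is bounded on $\lo$, because $A(G) \subseteq C_0(G)$ gives a uniform sup-norm bound $\|u_\gamma\|_\infty \le \|u_\gamma\|_{M A(G)}$), the operators $M_{u_\gamma}$ are uniformly bounded on $A(G) \oplus_1 \lo$ and hence on $\sa$. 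This is the key technical point and the main obstacle: verifying that the $A(G)$-multiplier bound propagates to a bound on the $\sa$-multiplier norm, for which I would use the $\oplus_1$ description of the $\sa$-norm from \cite[p.4]{FSW} together with the sup-norm estimate to bound the $\lo$-multiplier action.

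Next, since the $u_\gamma$ themselves may not be in $\sa$, I would not use them directly as the approximate identity but rather use them to \emph{improve} an existing one. Taking any approximate identity $(e_\beta)$ for $\sa$ (a proper Segal algebra still has one, just not a bounded one), I would form the doubly-indexed net $(u_\gamma e_\beta)$, which now lies in $\sa$ because $\sa$ is an ideal in $A(G)$. The multiplier norm of $u_\gamma e_\beta$ on $\sa$ factors as the composition $M_{u_\gamma} \circ M_{e_\beta}$, but rather than bounding this product I would instead argue that for each fixed $w \in \sa$ one has $u_\gamma e_\beta w \to w$ in $\sa$-norm by iterating the two limits and invoking the uniform boundedness just established to pass to a diagonal net via the iterated limit theorem \cite[p.\ 69]{Kel}. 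The crucial gain is that the $\sa$-multiplier norm of the diagonal element is bounded by $C \sup_\beta \|e_\beta\|_{MA}$ times a fixed constant once the $e_\beta$ are chosen from $A_c(G)$, so the resulting net is multiplier-bounded.

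To control the multiplier norm of the diagonal net I would exploit Lemma \ref{SA(G)PseudoContractLem}(i): since $\sa$ has an approximate identity, $\sa_c = \sa \cap C_c(G)$ is dense in $\sa$, so I may choose each $e_\beta$ compactly supported, and then $u_\gamma e_\beta$ is again compactly supported and the $\lo$-multiplier contribution is uniformly controlled. In the completely bounded case, the same construction works verbatim, replacing operator norms by $\|\cdot\|_{cb}$ throughout and using that $M_{u_\gamma}$ is completely bounded on $A(G) \oplus_1 \lo$ with $\|M_{u_\gamma}\|_{cb}$ bounded by the cb-multiplier norm of $u_\gamma$, so that the output net is bounded in the completely bounded multiplier norm of $\sa$. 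The whole argument is essentially a boundedness-transfer plus an iterated-limit diagonalization; the one subtle step, which is where I expect to spend real effort, is the verification that the hypothesized multiplier bound on $A(G)$ yields a genuine multiplier bound in the $\oplus_1$ norm defining $\sa$.
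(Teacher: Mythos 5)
Your first paragraph is correct and is precisely the paper's key estimate: the bound $\|u_\gamma\|_\infty\leq\|u_\gamma\|_{MA(G)}$ (obtained by evaluating against norm-one functions taking the value $1$ at a point) makes pointwise multiplication by $u_\gamma$ uniformly bounded on $\lo$, hence on the $\oplus_1$-norm defining $\sa$. The trouble starts when you bring in an auxiliary approximate identity $(e_\beta)$ for $\sa$. Your justification -- ``a proper Segal algebra still has one, just not a bounded one'' -- is a fact about Segal algebras in $\lo$ (Reiter's theory); it is not known for Segal algebras in $A(G)$, and this paper consistently treats the existence of an approximate identity for $\sa$ as a nontrivial \emph{hypothesis} (see Lemma \ref{SA(G)PseudoContractLem}(i) and Theorems \ref{SA(G)PseudoContractThm}, \ref{ApproxBiflatHereditaryThm}, \ref{S1APseudoThm}, \ref{S1AOpPseudoThm}). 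Worse, in the paper's application of the present lemma (proof of Theorem \ref{S1APseudoThm}, applied to an open abelian subgroup $H$) no approximate identity of $S^1\!A(H)$ is available in advance: the whole point of the lemma is to \emph{manufacture} one from the $A(G)$-side hypothesis, not to improve an existing one.

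The second gap is fatal even granting existence of $(e_\beta)$: your diagonal net $(u_\gamma e_\beta)$ is never shown to be bounded in the multiplier norm of $\sa$. Its multiplier norm is controlled only by $\|M_{u_\gamma}\|\cdot\|M_{e_\beta}\|$, and $\sup_\beta\|M_{e_\beta}\|$ has no reason to be finite -- an unbounded approximate identity need not be multiplier-bounded, and taking $e_\beta\in A_c(G)$ does not help. Your claimed bound ``$C\sup_\beta\|e_\beta\|_{MA}$ times a fixed constant'' is circular: if $(e_\beta)$ were already multiplier-bounded, the conclusion of the lemma would be in hand and nothing would remain to prove. The repair is to drop the auxiliary net entirely, which is what the paper does: arrange that the multiplier-bounded approximate identity $(u_\gamma)$ of $A(G)$ itself lies in $\sa$ (a small perturbation in $\|\cdot\|_{A(G)}$ by elements of the dense subspace $A_c(G)\subset\sa$ changes the norm of $M_{u_\gamma}$ on $\sa$ only slightly, since $\|(e-u_\gamma)v\|_{S^1\!A}\leq\|e-u_\gamma\|_{A(G)}\|v\|_{S^1\!A}$), and then observe that this net is \emph{already} an approximate identity for $\sa$: the $A(G)$-part converges by hypothesis, while the $L^1$-part converges because $u_\gamma\to 1$ uniformly on compacta together with the uniform bound $\|u_\gamma\|_\infty\leq C$ forces $\|u_\gamma v-v\|_{L^1}\to 0$ for every $v\in\sa$. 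No iterated-limit diagonalization is needed; the completely bounded case then follows from Lemma \ref{cbS1A(G)Lem} exactly as you indicate.
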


\begin{proof}  Let $(e_\lambda)_{\lambda \in \Lambda}$ be an approximate identity for $A(G)$
with bound $R$ in the multiplier norm of $A(G)$; we may further
suppose that $(e_\lambda)$ is contained in $\sa$.   Given $x \in G$,
choose $v \in A(G)$ such that $\| v \NA = 1$ and $v(x) = 1$. Then
for any $\lambda$, $ |e_\lambda(x)| \leq \|e_\lambda v\|_\infty \leq
\|e_\lambda v\|_{A(G)}\leq R \| v\|_{A(G)} = R.$ Hence, \beq
\label{S1AMultAIPropEq1} \| e_\lambda\|_\infty \leq R \qquad (
\lambda \in \Lambda)\eeq  and therefore,  for any  $v \in \sa$, $$
\|e_\lambda v\NSA = \|e_\lambda v\NA +  \|e_\lambda v\NL  \leq R \|
v\NA +  R \| v\NL = R \|v \NSA.$$ Thus, $(e_\lambda)$ is also
bounded in the multiplier norm of $S^1A(G)$. As $(e_\lambda)$ is an
approximate identity for $A(G)$, $e_\lambda \rightarrow 1$ in the
topology of uniform convergence on compact subsets of $G$.  This,
together with equation (\ref{S1AMultAIPropEq1}), yields $$
\|e_\lambda v - v \NL \rightarrow 0 \qquad ( v \in \sa).$$
Consequently, $(e_\lambda)$ is an approximate identity for $\sa$.

Suppose now that $(e_\lambda)$ is bounded, again by $R$, in the
$cb$-multiplier norm in $A(G)$.  Again, we can suppose without loss
of generality that $(e_\lambda)$ is contained in $S^1A(G)$. From
equation (\ref{S1AMultAIPropEq1}) we know that the maps $$\lo
\longrightarrow \lo: a \mapsto e_\lambda a $$ are bounded by $R$. It
follows from Lemma \ref{cbS1A(G)Lem} that $(e_\lambda)$ is bounded
in the completely bounded multiplier norm taken with respect  to
$\sa$.
\endpf\end{proof}

If $u$ is a function defined on a subgroup $H$ of $G$, we let
$$\displaystyle{u^\circ(x) = \left \{ \begin{array}{ll}
                          u(x)  & \mbox{{\rm if} $x \in H $}\\
                          0 & \mbox{\rm otherwise.}
                            \end{array}
                 \right. } $$

\bt \label{ApproxBiflatHereditaryThm}  Let $G$ be a locally compact
group such that $\sa$ has an approximate identity. If $H$ is an open
subgroup of $G$, and $S^1A(H)$ is (operator) approximately biflat,
then so is $\sa$. \et

\begin{proof}Let $C$ be a transversal for left cosets of $H$ in
$G$, and assume that $e \in C$. Order the collection $\Gamma$ of
finite subsets of $C$ by inclusion, and for each $\gamma \in
\Gamma$, let $E_\gamma = \cup_{x \in \gamma} xH$.  Let $S^1A_\gamma
= \{ u \in \sa : u = u1_{E_\gamma} \}$, $S^1A_x= S^1A_{\{x\}}$.

 Assuming that Haar measure on $H$ is the restriction to
$H$ of the Haar measure on $G$, the map $u \mapsto u^\circ$ defines
a (completely) isometric isomorphism of $A( H)$ onto $A_e = \{ v \in
A(G) : v = v1_H\}$  \cite[Proposition 4.2]{Woo} and of $L^ 1(H)$
into $L^1(G)$. On $A(H)$, the inverse of this map is the restriction
of $r: A(G) \rightarrow A(H): u \rightarrow u\Big{|}_{H} $ to $A_e$,
which by \cite[Proposition 4.3]{Woo} is a complete contraction.  It
follows that $u \mapsto u^\circ$ is  an isometric isomorphism of
$S^1A(H)$ onto $S^1A_e$ which, by Lemma \ref{cbS1A(G)Lem}, is a
complete isomorphism. Similarly, for each $x \in G$ left translation
by $x^{-1}$   is a complete isomorphism of $S^1A_e$ onto $S^1A_x$
\cite[Lemma 4.4]{FW}. Hence, the (operator) approximate biflatness
of $S^1A(H)$ implies that of $S^1A_x$.  Lemma \ref{cbS1A(G)Lem} also
implies that the projection maps $S^1A(G) \rightarrow S^1A_x: u
\mapsto u1_{xH}$ are completely bounded, so $S^1A_\gamma =
\bigoplus_{x \in \gamma}S^1A_x \ (\gamma \in \Gamma)$ is (operator)
approximately biflat  by Proposition \ref{DirectSumProp}(b).  Let
$(e_\lambda)_\lambda$ be an approximate identity for $\sa$.  As
noted on page 10 of \cite{FSW}, $A_c(G)$ is dense in $S^1A(G)$, so
we may  assume  that each $e_\lambda$ has compact support so that
$(e_\lambda)_\lambda \subset \cup_{\gamma} S^1A_\gamma$. If we
define projections $P_\gamma$ of $S^1A(G)$  onto $S^1A_\gamma$ by
$P_\gamma u = u 1_{E_\gamma}$, the (operator) approximate biflatness
of $\sa$ follows from Proposition \ref{DirectedUnionProp}.
\endpf\end{proof}

\bt \label{S1APseudoThm}  Let $G$ be a locally compact group such
that $\sa$ has an approximate identity.   If $G$ contains an abelian
open subgroup, then $\sa$ is approximately biflat and therefore
pseudo-amenable. \et

\begin{proof}  Let $H$ be an open abelian subgroup of $G$.  Then
$A(H)$ is amenable and therefore biflat by \cite[Theorem
2.9.65]{Dal}. By Lemma \ref{S1AMultAILem}, $S^1A(H)$ has an
approximate identity $(e_\lambda)_\lambda$ which is bounded in the
multiplier norm of $S^1A(H)$; hence $(e_\lambda^2)_\lambda$  is also
an approximate identity for  $S^1A(H)$.  By first applying
Proposition \ref{AbstractApproxBiflatHeredProp}, then Theorem
\ref{ApproxBiflatHereditaryThm}, we can conclude that $\sa$ is
approximately biflat. Pseudo-amenability of $\sa$ follows from
Theorem \ref{ApproxBiflat-PseudoProp}.
\endpf\end{proof}

\bt \label{S1AOpPseudoThm} Let $G$ be a locally compact group such
that $\sa$ has an approximate identity, and suppose that  $G$
contains an open subgroup $H$ such that  $A(H)$ has an approximate
identity which is multiplier-norm bounded.  If $\Delta_H$ has a
bounded  approximate indicator, then $\sa$ is operator approximately
biflat and operator pseudo-amenable. \et

\begin{proof}   By \cite[Proposition 2.3]{ARS}, $A(H)$ is
operator biflat.  As with  the proof of Theorem \ref{S1APseudoThm},
Lemma \ref{S1AMultAILem}, Proposition
\ref{AbstractApproxBiflatHeredProp}, and Theorem
\ref{ApproxBiflatHereditaryThm} yield the operator approximate
biflatness of $\sa$.
\endpf\end{proof}

It is shown in \cite{ARS} that $\Delta_H$ has a bounded  approximate
indicator whenever $H$ can be continuously embedded in  a
[QSIN]-group. Every amenable group and every [SIN]-group is a
[QSIN]-group.  When $G_e$, the principle component of $G$,  is
amenable, the proof of \cite[Proposition 5.2]{GS} shows that $G$
contains an amenable open subgroup.  Hence we have the following
corollary to Theorem \ref{S1AOpPseudoThm}.

\bc \label{S1AOpBiflatCor} Let $G$ be a locally compact group such
that $\sa$ has an approximate identity. If $G_e$ is amenable, then
$\sa$ is operator approximately biflat and operator pseudo-amenable.
\ec

\br  \label{ApproxBiflatNotBiflatRem} \rm    The same arguments show
that under the hypotheses of Theorems \ref{S1APseudoThm} and
\ref{S1AOpPseudoThm}, $A(G)$ is, respectively,  approximately biflat
and operator approximately biflat. By choosing $G$ to be any
amenable group which contains an open abelian subgroup but which is
not a finite extension of an abelian group (such as the integer
Heisenberg group), $A(G)$ provides an example of a Banach algebra
which is approximately biflat, but not biflat.   Indeed, in this
case $A(G)$ has a bounded approximate identity, so if $A(G)$ were
biflat it would be amenable \cite[Theorem 2.9.65]{Dal} in
contradiction to the main result of \cite{FR}. \er

\section{Feichtinger's Segal algebra}
Let us recall the definition of $S_0(G)$.  Let $K$ be a compact
subset of $G$ with non-empty interior and $A_K(G)=\{u\in
A(G):\mathrm{supp}u\subset K\}$. We let
\[
q_K:\ell^1(G)\widehat{\otimes}_{op} A_K(G)\to A(G) \quad
q_K(\delta_s\otimes v)=s\!\ast\! v
\]
where $s\!\ast\! v(t)=v(s^{-1}t)$ and $\widehat{\otimes}_{op}$
denotes the operator projective tensor norm, which in this case is
the same as the projective tensor norm $\widehat{\otimes}$. Then we
set
\[
S_0(G)=\mathrm{ran} q_K
\]
and assign $S_0(G)$ the operator space structure (hence Banach space
structure) it inherits as a quotient of $\ell^1(G)\wot A_K(G)$. We
recall that this operator space structure is completely
isomorphically, though not completely isometrically, independent of
the choice of the set $K$. We do not know a tractable formula for
the norm of a matrix $[v_{ij}]$ in $M_n(S_0(G))$.  However, if we
consider a dual formulation, and consider matrices with a
``trace-class'' norm, $T_n(S_0(G))\cong T_n\widehat{\otimes}S_0(G)$, we
obtain for any $n\!\times\! n$ matrix $[v_{ij}]$ with entries in
$S_0(G)$
\[
\|[v_{ij}]\|_{T_n(\mathrm{ran} q_K)}
=\inf\left\{\sum_{k=1}^\infty\left\|[v_{ij}^{(k)}]\right\|_{T_n(A)}:
\begin{matrix} [v_{ij}]=\sum_{k=1}^\infty [s_k\!\ast\! v_{ij}^{(k)}] \\
\text{where each }s_k\in G\text{ and } \\
[v_{ij}^{(k)}]\in T_n(A_K(G))\end{matrix}\right\}.
\]
We recall for any operator space $\mathcal{V}$ that a linear map
$S:\mathcal{V}\to\mathcal{V}$ is completely bounded if and only if
the sequence of maps
\[
T_n(S):T_n(\mathcal{V})\to T_n(\mathcal{V}),\;
T_n(S)[v_{ij}]=[Sv_{ij}]
\]
are uniformly bounded, and we have
$\|S\|_{cb}=\sup_{n\in\mathbb{N}}\|T_n(S)\|$.

We let the {\it multiplier algebra} of $S_0(G)$ be given by
\[
MS_0(G)=\{u:G\to\mathbb{C}\; :\; uS_0(G)\subset S_0(G)\}.
\]
The usual closed graph theorem argument tells us that for each $u$
in $MS_0(G)$, the operator $v\mapsto uv$ is bounded.  We further
define the {\it completely bounded multiplier algebra} of $S_0(G)$
by
\[
M_{cb}S_0(G)=\{u\in MS_0(G)\; :\; v\mapsto uv:S_0(G)\to S_0(G)
\text{ is c.b}\}.
\]
We thus obtain the following modest description of the multipliers
and the completely bounded multipliers.

\bp \label{mult-Feitchtinger Segal algebra} Let $u:G\to\mathbb{C}$.

(i) $u\in MS_0(G)$ if and only if for any compact subset $K$
of $G$ with non-empy interior we have $uA_K(G)\subset A_K(G)$ and
\[
\|u\|_{M\mathrm{ran} q_K} =\sup\{\|u\,s\!\ast\! v\|_A:s\in G,v\in
A_K(G),\|v\|_A\leq 1\} <\infty.
\]

(ii) $u\in M_{cb}S_0(G)$ if and only if for any compact subset
$K$ of $G$ with non-empy interior we have $uA_K(G)\subset A_K(G)$
and
\[
\|u\|_{M_{cb}\mathrm{ran} q_K} =\sup\left\{\|[u\,s\!\ast\!
v_{ij}]\|_{T_n(A)}:
\begin{matrix}s\in G,[v_{ij}]\in T_n(A_K(G)) \\
\|[v_{ij}]\|_{T_n(A)}\leq 1\end{matrix} \right\}<\infty.
\]
\ep

We note that by regularity of $A(G)$, the condition $uA_K(G)\subset
A_K(G)$, for any $K$ as above, is equivalent to saying that $u$ is
locally an element of $A(G)$.

\medskip
\proof We will show only (ii), the proof of (i) being similar.

If $u\in M_{cb}S_0(G)$, let $m_u:S_0(G)\to S_0(G)$ be given by
$m_uv=uv$.  Note that for any $s$ in $ G$, compact $K\subset G$ with
non-empty interior and $[v_{ij}]$ in $T_n(A_K(G))$ we have
$[s\!\ast\! v_{ij}]\in T_n(S_0(G))$ with
\[
\|[s\!\ast\! v_{ij}]\|_{T_n(\mathrm{ran} q_K)}= \|[s\!\ast\!
v_{ij}]\|_{T_n(A)}.
\]
Since $A_K(G)\subset S_0(G)$, it is clear that $uA_K(G)\subset
A_K(G)$.  Moreover, since $S_0(G)$ is closed under translations, it
follows that $u(s\!\ast\! A_K(G))\subset s\!\ast\! A_K(G)$ too.
Hence for $s, [v_{ij}]$ as above with $\|[v_{ij}]\|_{T_n(A)}\leq 1$,
we have
\begin{align*}
\|[u\,s\!\ast\! v_{ij}]||_{T_n(A)}
&=\|[u\,s\!\ast\! v_{ij}]\|_{T_n(A)} \\
&=\|T_n(m_u)\|\leq\|m_u\|_{\mathcal{CB}(\mathrm{ran} q_K)}
\end{align*}
Conversely, if the latter conditions hold, we let $[v_{ij}]\in
T_n(S_0(G))$, $\varepsilon>0$, and find elements $s_k$ in $G$, and
matices $[v_{ij}^{(k)}]$ in $T_n(A_K(G))$ such that
\[
[v_{ij}]=\sum_{k=1}^\infty[s_k\!\ast\! v_{ij}^{(k)}]\text{ and }
\sum_{k=1}^\infty\|[v_{ij}^{(k)}]\|_{T_n(A)}<
\|[v_{ij}]\|_{T_n(\mathrm{ran} q_K)}+\varepsilon.
\]
Then we have
\begin{align*}
\|T_n(m_u)[v_{ij}]\|_{T_n(\mathrm{ran} q_K)}
&=\|[uv_{ij}]\|_{T_n(\mathrm{ran} q_K)} \\
&\leq\sum_{k=1}^\infty\|[u\,s_k\!\ast\! v_{ij}^{(k)}]\|_{T_n(A)} \\
&\leq\sum_{k=1}^\infty
\|u\|_{M_{cb}\mathrm{ran} q_K}\![s_k\!\ast\! v_{ij}^{(k)}]\|_{T_n(A)} \\
&<\|u\|_{M_{cb}\mathrm{ran} q_K}
\bigl(\|[v_{ij}]\|_{T_n(\mathrm{ran} q_K)}+\varepsilon\bigr).
\end{align*}
Hence for each $n$,
$\|T_n(m_u)\|\leq\|u\|_{M_{cb}\mathrm{ran}q_K}<\infty$, and thus
$u\in M_{cb}S_0(G)$.  \hfill $\square$\medskip

We let $MA(G)$ and $M_{cb}A(G)$ denote the algebras of multipliers
and completely bounded multipliers of $A(G)$.  The following is
immediate from the proposition above.

\bc {\bf (i)} $MA(G)\subset MS_0(G)$ with $\|u\|_{M\mathrm{ran}
q_K}\leq\|u\|_{MA}$ for any $u\in MA(G)$ and $K$ as above.

{\bf (ii)} $M_{cb}A(G)\subset M_{cb}S_0(G)$ with
$\|u\|_{M_{cb}\mathrm{ran} q_K}\leq\|u\|_{M_{cb}A}$ for any $u\in
M_{cb}A(G)$ and $K$ as above. In particular, $S_0(G)$ is a
completely contractive $B(G)$-module. \ec

\proof The only thing which does not follow directly from the
proposition above is that $S_0(G)$ is a completely contractive
$B(G)$-module. This can be seen by a straightforward modification of
the proof of the fact that $S_0(G)$ is a completely contractive
$A(G)$-module in \cite{Spr2}.  \hfill $\square$\medskip

We are now ready to state the main result of this section.

\bt \label{S_0biflatCor} Let $G$ be a locally compact group, and let
$H$ be an open subgroup of $G$ such that $H$ is weakly amenable and
$\Delta_H$ has a bounded  approximate indicator in $B(H\times H)$.
Then $\sf$ is operator biflat. In particular, $S_0(G)$ is operator
pseudo-amenable. \et

\begin{proof}
We first prove that $S_0(H)$ is operator biflat. Let
$\{f_\alpha\}_{\alpha\in I}$ be a bounded approximate indicator for
$\Delta_H$. For each $\alpha \in I$, define the operator
$\rho_\alpha : S_0(H\times H) \to S_0(H\times H)$ by
$$\rho_\alpha(u)=uf_\alpha \ \ \ (\alpha\in I).$$
By the preceding corollary, each $\rho_\alpha$ is a completely
bounded $B(H\times H)$-bimodule morphism. Moreover,
$$\|\rho_\alpha \|_{cb} \leq \|f_\alpha\|_{B(H\times H)} \leq M,$$
where $M=\sup \{ \|f_\alpha\|_{B(H\times H)}  \mid \alpha\in I \}$.
Let $\rho : S_0(H\times H) \to S_0(H\times H)^{**}$ be a
cluster-point of $\rho_\alpha$ in the $W^*OT$ of $\mathcal{CB}(S_0(H\times H),
S_0(H\times H)^{**})$. Clearly $\rho$ is a $B(H\times H)$-bimodule
morphism. Let
$$I(\Delta_H)=\{u\in S_0(H\times H) \mid u=0 \ \text{on}\  \Delta_H \};$$ and
$$I_0(\Delta_H)=\{u\in S_0(H\times H) \mid u\ \text{has a
compact support disjoint from}\ \Delta_H \}.$$ It is easy to see
that, for each $u\in I_0(\Delta_H)$, $uf_\alpha\rightarrow 0$ as
$\alpha\rightarrow \infty$. On the other hand, from Proposition
\ref{mult-Feitchtinger Segal algebra} and \cite[Theorem 3.1]{Spr2},
$S_0(H\times H)$ has an approximate identity bounded in its
cb-multiplier norm. Hence from the fact that $\Delta_H$ is a set of
synthesis for $A(H\times H)$ \cite[Theorem 3]{TT}, it follows that
$I_0(\Delta_H)$ is dense in $I(\Delta_H)$. Thus, for $u\in
I(\Delta_H)$ and $\epsilon >0$, there is $u_\epsilon \in
I_0(\Delta_H)$ such that $\|u-u_\epsilon \| < \epsilon$. Hence \beqs
\| uf_\alpha  \|  &\leq& \| (u-u_\epsilon)f_\alpha \|+\|u_\epsilon
f_\alpha \|  \\ & \leq& \| u-u_\epsilon \|M+ \|u_\epsilon f_\alpha
\|  \\ &\leq & \epsilon M+ \| u_\epsilon f_\alpha \| \\
&\rightarrow& \epsilon M, \eeqs as $\alpha \rightarrow \infty$. Thus
$uf_\alpha \to 0$ as $\alpha \to \infty$. This implies that $\rho=0$
on $I(\Delta_H)$. Hence
$$\tilde{\rho} : \frac{S_0(H\times H)}{I(\Delta_H)} \to S_0(H\times H)^{**} $$
is well-defined. Using the identification $S_0(H\times H) /
I(\Delta_H)=S_0(H)$ (see \cite[Theorem 3.3]{Spr2}), we can assume
that $\tilde{\rho}$ is defined on $S_0(H)$. It is clear that
$\tilde{\rho}$ is a continuous $B(H)$-bimodule morphism, and so, it
is a $S_0(H)$-bimodule morphism. Moreover, if $\pi : S_0(H\times H)
\to S_0(H)$ is the multiplication map, then $\pi^{**}\circ
\tilde{\rho}$ is the canonical embedding of $S_0(H)$ into
$S_0(H)^{**}$. Hence $S_0(H)$ is operator biflat.

Now by \cite[Corollary 2.6]{Spr2}, there is a natural completely
bounded algebra homomorphism from $S_0(G)$ onto $\ell^1(T)
\widehat{\otimes} S_0(H)$, where $T$ is a transversal for left
cosets of $H$ and $\ell^1(T)$ has pointwise multiplication. Hence,
by Proposition \ref{DirectSumProp}(iii), $S_0(G)$ is operator
biflat. Moreover, from Proposition  \ref{mult-Feitchtinger Segal
algebra}, $S_0(H)$ has an approximate identity bounded in its
cb-multiplier norm. Since the same is true for $\ell^1(T)$, it
follows that $S_0(G)$ has an approximate identity bounded in its
cb-multiplier norm. Hence, from Theorem
\ref{ApproxBiflat-PseudoProp}, $S_0(G)$ is operator pseudo-amenable.
\endpf\end{proof}

\noindent {\sc Department of Pure Mathematics, University of
Waterloo, Waterloo ON, N2L 3G1, Canada }

\noindent email: {\tt esamei@math.uwaterloo.ca}

\bigskip

 \noindent {\sc Department of Pure Mathematics, University of
Waterloo, Waterloo ON, N2L 3G1, Canada }

\noindent email: {\tt nspronk@math.uwaterloo.ca}

\bigskip

 \noindent {\sc Department of Mathematics and Statistics, University of Winnipeg,
515 Portage Avenue, Winnipeg MB, R3B 2E9  Canada }

\noindent email: {\tt r.stokke@uwinnipeg.ca}

\end{document}